\documentclass[final,leqno,letterpaper]{etna}

\setbibdata{1}{xx}{xx}{2026} 

\usepackage{amssymb,amsmath,cite,booktabs,comment,graphicx}

\hypersetup{%
    pdftitle={A coercive space-time variational approach to fractional diffusion problems},
    pdfauthor={Herbert Egger, Marvin Fritz, Barbara Wohlmuth},
    pdfkeywords={fractional diffusion, fractional Sobolev spaces, coercivity, Galerkin discretization, space--time methods, a priori error analysis, fast convolution}
    }

\title{A coercive space-time variational approach to\\ fractional diffusion problems}

\author{Herbert Egger\footnotemark[1]
        \and Marvin Fritz\footnotemark[2] \and Barbara Wohlmuth\footnotemark[3]}

\shorttitle{A SPACE-TIME VARIATIONAL APPROACH TO FRACTIONAL DIFFUSION} 
\shortauthor{H.~EGGER, M.~FRITZ and B.~WOHLMUTH}

\newtheorem{remark}[theorem]{Remark}

\newtheorem{problem}[theorem]{Problem}

\begin{document}

\maketitle

\renewcommand{\thefootnote}{\fnsymbol{footnote}}

\footnotetext[2]{Institute of Numerical Mathematics, Johannes Kepler University Linz, and Johann Radon Institute for Computational and Applied Mathematics, 
Altenberger Str. 69, 4040 Linz, Austria.}
\footnotetext[3]{Faculty of Mathematics, University of Vienna,   Oskar-Morgenstern-Platz 1, 1090 Vienna, Austria.}
\footnotetext[4]{TUM School of Computation, Information and Technology, Technical University of Munich,
  Boltzmannstr. 3, 85748 Garching, Germany.}

\vspace{.2cm}

\begin{center}
\begin{minipage}{\textwidth} 
\em \footnotesize
This paper is dedicated to Olaf Steinbach on the occasion of his 60th
birthday. His pioneering work on polynomial approximation and on space--time variational formulations for evolutionary problems, in particular on coercive formulations based on modified Hilbert transforms, has provided important inspiration for the present work. 
\end{minipage}
\end{center}

\vspace{.2cm}

\begin{abstract}
We consider a fractional diffusion problem with temporal nonlocality acting on the diffusive flux. A coercive space--time variational formulation in Bochner-valued fractional Sobolev spaces is derived and the existence, uniqueness, and regularity of solutions are established. We further develop a conforming tensor-product Galerkin discretization and prove quasi-optimal error estimates in the anisotropic energy norm and improved convergence rates in weaker norms using duality arguments. In contrast to some space-time formulations for classical diffusion, the method preserves the causal structure of the evolution problem and leads to a time-stepping procedure with memory terms. On uniform time grids, the discrete history operator has a lower-triangular Toeplitz structure which enables an efficient implementation using fast recursive convolution techniques. 
\end{abstract}

\begin{keywords}
fractional diffusion, fractional Sobolev spaces, coercivity, Galerkin discretization, space--time methods, a priori error analysis, fast convolution
\end{keywords}

\begin{AMS}
26A33, 
35R11, 
65M12, 
65M60, 
65F30 
\end{AMS}


\newcommand{\dt}{\partial_t}
\newcommand{\RR}{\mathbb{R}}
\newcommand{\CC}{\mathbb{C}}
\newcommand{\NN}{\mathbb{N}}
\newcommand{\VV}{\mathbb{V}}
\newcommand{\Om}{\Omega}
\newcommand{\I}{(0,T)}
\newcommand{\cA}{\mathcal{A}}
\newcommand{\cB}{\mathcal{B}}
\newcommand{\ip}[2]{\left(#1,#2\right)}
\newcommand{\norm}[1]{\left\|#1\right\|}
\newcommand{\seminorm}[1]{\left|#1\right|}
\newcommand{\Q}{Q}
\newcommand{\cF}{\mathcal{F}}

\newcommand{\IOP}{I_{0+}}
\newcommand{\ITM}{I_{T-}}
\newcommand{\DOP}{D_{0+}}
\newcommand{\DTM}{D_{T-}}
\newcommand{\ddt}{\tfrac{d}{dt}}
\newcommand{\F}{\mathcal{F}}
\newcommand{\A}{\mathcal{A}}
\newcommand{\uht}{u_{h\tau}}
\newcommand{\vht}{v_{h\tau}}
\newcommand{\tuht}{\tilde u_{h\tau}}
\newcommand{\tvht}{\tilde v_{h\tau}}

\section{Introduction}
\label{sec:intro}

Fractional differential equations arise naturally as effective models for anomalous transport in heterogeneous and multiscale media, emerging from microscopic random-walk dynamics or from upscaling procedures; see, e.g., \cite{LogvinovaNeel2004,CosenzaGiotGiraudHedan2021}. In contrast to classical diffusion
equations, the temporal nonlocality of fractional models globally couples the solution at different times and therefore leads to both analytical and computational
challenges. We refer to
\cite{Podlubny1999,Diethelm2010} for introduction to the field of fractional differential equations. 

As a model problem for this work, we consider the time-fractional diffusion equation
\begin{align}\label{eq:model-strong}
\dt u - \Delta\,\dt^{1-\alpha} u
    = f, \qquad 0 < \alpha < 1,
\end{align}
where $\dt^{1-\alpha}$ is the left-sided Riemann-Liouville fractional derivative; see below for a precise definition.
Equations of this type naturally arise, for example, in the coarse graining of multiscale models and stochastic evolution problems; see e.g. \cite{AngstmannHenry2020,MetzlerKlafter2000}. As outlined in these works, the fractional time-derivative naturally appears in the diffusive flux or transport terms. 
Related models arising in applications include the time-fractional Fisher--KPP models for tumor growth
\cite{fritz2026time} or time-fractional Fokker--Planck equations
\cite{fritz2024analysis}.

A standard approach to the analysis and numerical treatment of the fractional differential equations \eqref{eq:model-strong} is to apply the fractional integral $I^{1-\alpha}$ to the system. This formally leads to the more conventional sub-diffusion equation
\begin{align}\label{eq:caputo}
    \dt^\alpha u -\Delta u=g, \qquad 0 < \alpha < 1,
\end{align}
with $g=I^{1-\alpha}f$ introduced for abbreviation. In this context, $\dt^\alpha$ is usually understood as the Caputo fractional derivative. 
Existence and uniqueness of weak solutions, as well as regularity and stability estimates, have been studied extensively; see, e.g.,
\cite{LiXie2019,MuAhmadHuang2017,RahbyYang2025}.
A variety of numerical approaches to fractional diffusion have been developed,
including finite difference schemes such as the L1 and
Gr\"unwald--Letnikov schemes, finite element methods, and spectral methods; see \cite{JinLazarovZhou2013,LinXu2007,StynesORiordanGracia2017} for examples. A comprehensive overview about different approaches is provided in \cite{JinLazarovZhou2019}.
A somewhat different space-time variational approach to fractional diffusion has been  introduced by Li and Xu~\cite{LiXu2009}. They show that the differential operator arising in \eqref{eq:caputo} is in fact continuous and coercive on an appropriate sub-space of 
\begin{align*}
H^{\alpha/2}(0,T;L^2(\Omega)) \cap L^2(0,T;H^1(\Omega)). \end{align*}
This allows them to establish well-posedness of the weak form of this problem and the quasi-optimality of Galerkin approximations using results of elliptic theory. 
An alternative space-time framework based on inf-sup stability has been developed in~\cite{DuanEtAl2018}.

A substantial body of work has also been devoted to the investigation of space--time variational formulations for the classical diffusion equation 
\begin{equation}\label{eq:heat}
    \dt u-\Delta u=g,
\end{equation}
which can formally be interpreted as the limit of \eqref{eq:caputo} as
$\alpha\to1$. Let us note that the coercivity argument of
Li and Xu~\cite{LiXu2009} degenerates in this limit, since the corresponding stability constant tends to zero.
Coercive space--time formulations for
\eqref{eq:heat} can nevertheless be obtained by a suitable modification of
the test functions. 
In particular, Steinbach and collaborators apply
modified Hilbert transforms on the test space to construct coercive space--time formulations; see, e.g.,
\cite{Steinbach2015,SteinbachYang2019,SteinbachZank2020}.
Let us note that these integral transformations in the time variable are closely related to fractional integral operators and typically lead to space-time formulations with a global coupling in time. 
In particular, the locality and causality of the evolution problem \eqref{eq:heat} is usually lost during the discretization process. 
The efficient realization of such schemes has been addressed, for instance, in \cite{SchwabStevenson2009,HarbrechtSchwabZank2026}.
Other approaches leading to quasi-optimal Galerkin approximations can be found in \cite{Andreev2013,TantardiniVeeser2016}; see 
\cite{LangerSteinbach2019} for a collection of further results. 

\medskip
\noindent
\textbf{Main contributions.}
In this work, we extend the space--time variational approach of Li and Xu~\cite{LiXu2009} for equation~\eqref{eq:caputo}
to the time-fractional diffusion equation \eqref{eq:model-strong} in its natural form. Although related
in spirit, the resulting formulation leads to a different functional analytic setting, in which the weak solution is sought in an appropriate sub-space of 
\begin{align*}
    H^1(0,T;L^2(\Omega))
    \cap
    H^{1-\alpha/2}(0,T;H_0^1(\Omega)). 
\end{align*}
This allows us to incorporate initial conditions in a strong form, but requires additional arguments from fractional calculus. We establish continuity and coercivity of the bilinear form in the corresponding variational formulation and derive regularity results for the solution. Based on this framework, we develop a conforming tensor-product Galerkin discretization and prove quasi-optimal
error estimates in the natural space--time energy norm, as well as improved $L^2$-estimates by a duality argument. 
A distinctive feature of the proposed discretization is that it preserves the causal structure of the underlying fractional differential equation. In particular, it can be realized as a time-stepping scheme for an evolution problem with memory. On uniform time grids, the algebraic systems obtained after discretization have a lower-triangular Toeplitz structure, enabling fast history evaluation and efficient solution strategies.

\medskip
\noindent
\textbf{Outline.}
Section~\ref{sec:prelim} introduces the notation and the auxiliary results
from fractional calculus used throughout the paper.
In Section~\ref{sec:anal}, we derive the space--time variational formulation,
prove continuity and coercivity, and establish well-posedness and regularity
results for the solution.
Section~\ref{sec:disc} introduces the lowest-order conforming space--time
Galerkin discretization and derives a priori error estimates in
the energy norm, together with improved $L^2$-estimates based on a duality argument.
In Section~\ref{sec:fast}, we analyze the algebraic structure of the resulting
linear systems and exploit their lower-triangular structure for efficient
solution. On uniform time grids, the temporal history operator has an exact
lower-triangular Toeplitz structure, which enables fast convolution
techniques.
Section~\ref{sec:num} presents numerical experiments illustrating the
theoretical convergence rates and the computational performance of the
method.
Finally, Section~\ref{sec:final} discusses limitations of the present
analysis and possible extensions.

\section{Notation and fractional calculus}
\label{sec:prelim}

We start with establishing our notation and then recall some basic facts about fractional calculus. A comprehensive introduction to the field can be found in~\cite{Diethelm2010,SamkoKilbasMarichev1993}. We here only collect the most important results which are used in our analysis below. Our presentation closely follows \cite{LiXie2019}; for general notation, also see \cite{AdamsFournier2003}.

\subsection{Fractional Sobolev spaces}
We write $H^s(\mathbb R)$ for the fractional Sobolev spaces with norm $\|v\|_{H^s(\mathbb R)} = \|(1+\omega^2)^{s/2}\F(v)\|_{L^2(\mathbb R)}$ and $\F(v)(\omega) = (\omega)=(2\pi)^{-1/2}\int_{\mathbb R}e^{-i\omega t}v(t)\,dt$ denoting the Fourier transform of $v$. 
For an interval $(0,T)$, we define
\begin{align}
    H^s(0,T) = \{v|_{(0,T)}:v\in H^s(\mathbb R)\},
\end{align}
by restriction. These are Hilbert spaces when equipped with the quotient norm
\begin{align}
    \|v\|_{H^s(0,T)} = \inf_{\substack{\tilde v\in H^s(\mathbb R)\\\tilde v|_{(0,T)}=v}}
    \|\tilde v\|_{H^s(\mathbb R)}.
\end{align}
Let us recall that, for $s \ge 0$, the spaces $H^s(0,T)$ coincide, with equivalent norms, with the Sobolev-Slobodeckij spaces $W^{s,2}(0,T)$ and, for integer $s$, with the usual Sobolev spaces.
We will further use at different occasions that the embedding 
\begin{align}
H^s(0,T) \hookrightarrow C[0,T], \quad s > 1/2,
\end{align} 
is continuous and, in particular, the traces at $t=0$ and $t=T$ are well-defined for corresponding functions.
%
%
For details and the extension to multidimensional domains, see~\cite[Ch.~7]{AdamsFournier2003}.

\subsection{Fractional integrals}

Let $T>0$ and $\beta>0$ be given. For $v\in L^2(0,T)$, we denote the left- and right-sided Riemann--Liouville fractional integrals by
\begin{align}
    \IOP^\beta v(t)
    &=
    \frac{1}{\Gamma(\beta)}
    \int_0^t (t-s)^{\beta-1}v(s)\,ds,
    \label{eq:IOP}
    \\
    \ITM^\beta v(t)
    &=
    \frac{1}{\Gamma(\beta)}
    \int_t^T (s-t)^{\beta-1}v(s)\,ds.
    \label{eq:ITM}
\end{align}
We later write $I_+^\beta$ and $I_-^\beta$ for the corresponding fractional integrals on $\mathbb R$, which are obtained by replacing $0$ and
$T$ in \eqref{eq:IOP}--\eqref{eq:ITM} by $-\infty$ and $\infty$, respectively.
Let us recall that the left- and right-sided fractional integrals satisfy a semigroup property
\begin{align}     \label{eq:semi}
\IOP^\beta\IOP^\gamma v =\IOP^{\beta+\gamma}v
\qquad \text{and} \qquad 
\ITM^\beta\ITM^\gamma v = \ITM^{\beta+\gamma}v
\end{align}
for all $\beta,\gamma>0$ and $v \in L^2(0,T)$; see~e.g.~\cite[Theorem~2.2]{Diethelm2010}.
Moreover, the two operators are adjoint to each other, i.e.
\begin{align}
    (\IOP^\beta v,w)_{L^2(0,T)}
    =
    (v,\ITM^\beta w)_{L^2(0,T)}
    \qquad
    \forall\,v,w\in L^2(0,T).
    \label{eq:adj}
\end{align}
This formula allows to define the fractional integrals in a distributional sense and to extend their domain of definition to larger spaces; see e.g. \cite[Sec.~2]{LiXie2019} and below.
We shall also use the following coercivity estimate, which is derived in the spirit of \cite{ErvinRoop2006}; also see \cite{LiXu2009,LiXie2019}.

\begin{lemma}\label{lem:f1}
Let $0<\beta<1/2$. Then, for every $v\in L^2(0,T)$, there holds
\begin{align*} 
    (\IOP^\beta v, \ITM^\beta v)_{L^2(0,T)}
    \ge
    \cos(\pi\beta)
    \|\IOP^\beta v\|_{L^2(0,T)}^2.
\end{align*}
\end{lemma}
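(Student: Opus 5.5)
The plan is to extend $v$ by zero to the whole real line and to pass to the Fourier domain, where the left-sided fractional integral acts as a multiplier. Let $\tilde v\in L^2(\mathbb R)$ denote the zero extension of $v$. The first step is to move everything onto one side. By the semigroup property \eqref{eq:semi} and the adjointness relation \eqref{eq:adj},
\begin{align*}
(\IOP^\beta v,\ITM^\beta v)_{L^2(0,T)} = (\IOP^\beta\IOP^\beta v, v)_{L^2(0,T)} = (\IOP^{2\beta} v, v)_{L^2(0,T)} = (I_+^{2\beta}\tilde v,\tilde v)_{L^2(\mathbb R)}.
\end{align*}
The last identity holds because $\tilde v$ vanishes on $(-\infty,0)$, so $I_+^{2\beta}\tilde v=\IOP^{2\beta}v$ on $(0,T)$. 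It also holds because $\tilde v$ vanishes outside $(0,T)$, so only the values on $(0,T)$ contribute to the inner product. In the same way $I_+^\beta\tilde v=\IOP^\beta v$ on $(0,T)$, and therefore
\begin{align*}
\|\IOP^\beta v\|_{L^2(0,T)}\le \|I_+^\beta\tilde v\|_{L^2(\mathbb R)}.
\end{align*}

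The second step is the Fourier symbol. For $\gamma\in(0,1)$ one has
\begin{align*}
\F(I_+^\gamma w)(\omega)=(i\omega)^{-\gamma}\F(w)(\omega),
\end{align*}
using the principal branch, so that $(i\omega)^{-\gamma}=|\omega|^{-\gamma}e^{-i\pi\gamma\,\mathrm{sign}(\omega)/2}$. Since $\tilde v$ has compact support, $\F(\tilde v)$ is bounded and smooth. Because $2\beta<1$, the weight $|\omega|^{-2\beta}$ is integrable near $\omega=0$. Hence $|\omega|^{-\beta}\F(\tilde v)\in L^2(\mathbb R)$, and the same holds for $|\omega|^{-2\beta}|\F(\tilde v)|^2\in L^1$, so both quantities below are finite. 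By Plancherel,
\begin{align*}
(I_+^{2\beta}\tilde v,\tilde v)_{L^2(\mathbb R)}=\int_{\mathbb R}(i\omega)^{-2\beta}|\F(\tilde v)(\omega)|^2\,d\omega .
\end{align*}
The left-hand side is real. Since $\tilde v$ is real, $|\F(\tilde v)|$ is even in $\omega$, so the imaginary parts $\mp\sin(\pi\beta)$ from $\omega\gtrless0$ cancel. The integral therefore equals
\begin{align*}
\cos(\pi\beta)\int_{\mathbb R}|\omega|^{-2\beta}|\F(\tilde v)|^2\,d\omega=\cos(\pi\beta)\,\|I_+^\beta\tilde v\|_{L^2(\mathbb R)}^2 .
\end{align*}
The last equality is Plancherel again, applied to the symbol of $I_+^\beta$. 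Combining this with the bound from the first step yields the claim.

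The main obstacle is the rigorous justification of the symbol formula for $I_+^\gamma$ applied to $L^2$ functions with compact support. The difficulty is that $I_+^\gamma\tilde v$ decays only like $t^{\gamma-1}$ as $t\to\infty$, so it is not integrable, and the kernel $t_+^{\gamma-1}/\Gamma(\gamma)$ is not in $L^1$. I would first verify the identity for $w\in C_c^\infty(\mathbb R)$. There I would compute the transform of the kernel with a damping factor, $t_+^{\gamma-1}e^{-\epsilon t}/\Gamma(\gamma)$, whose transform is $(2\pi)^{-1/2}(\epsilon+i\omega)^{-\gamma}$, and then let $\epsilon\to0$ using dominated convergence on the Fourier side. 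I would then extend the identity to $\tilde v$ by density: approximate $\tilde v$ in $L^2$ by smooth functions supported in a fixed interval, so that the Fourier transforms converge uniformly on compacts and in $L^2$. This step is also where $\beta<1/2$ enters, since it makes the weighted integrals finite.
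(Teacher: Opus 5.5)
Your argument follows the paper's proof in all essentials. Both use the zero extension and the Fourier symbols $(\pm i\omega)^{-\beta}$. In both, the phase gives $\cos(\pi\beta)|\omega|^{-2\beta}$ once the odd imaginary part cancels. Both then apply Plancherel back to $\|I_+^\beta\tilde v\|_{L^2(\mathbb R)}^2$ and truncate to $(0,T)$. The only difference is your first step. You merge the operators into $(I_+^{2\beta}\tilde v,\tilde v)_{L^2(\mathbb R)}$. The paper keeps the split form $(I_+^\beta\tilde v,I_-^\beta\tilde v)_{L^2(\mathbb R)}$, which equals $(\IOP^\beta v,\ITM^\beta v)_{L^2(0,T)}$ because $I_+^\beta\tilde v$ vanishes on $(-\infty,0)$ and $I_-^\beta\tilde v$ vanishes on $(T,\infty)$.

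The merge leaves your step ``By Plancherel'' unjustified for part of the parameter range. For $t>T$ one has $I_+^{2\beta}\tilde v(t)=t^{2\beta-1}\Gamma(2\beta)^{-1}\int_0^T v\,ds+O(t^{2\beta-2})$. Hence $I_+^{2\beta}\tilde v\notin L^2(\mathbb R)$ whenever $1/4\le\beta<1/2$ and $\int_0^T v\,ds\neq0$. Equivalently, $(i\omega)^{-2\beta}\F(\tilde v)$ is not square integrable near $\omega=0$. So the $L^2$ Parseval identity does not apply, even though both sides of your formula are finite. The obstacle you single out, the symbol formula itself, is standard (the paper cites it). The real issue is this failure of square integrability.

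In the paper's split form, each factor $I_\pm^\beta\tilde v$ lies in $L^2(\mathbb R)$ precisely because $\beta<1/2$, so Plancherel applies directly. You can repair your version in either of two ways:
\begin{itemize}
\item Undo the merge via adjointness on $\mathbb R$: $(I_+^{2\beta}\tilde v,\tilde v)_{L^2(\mathbb R)}=(I_+^\beta\tilde v,I_-^\beta\tilde v)_{L^2(\mathbb R)}$.
\item Apply your damping idea to the pairing rather than to the symbol. With the $L^1$ kernel $t_+^{2\beta-1}e^{-\epsilon t}/\Gamma(2\beta)$, Plancherel is legitimate and gives $\int_{\mathbb R}(\epsilon+i\omega)^{-2\beta}|\F(\tilde v)|^2\,d\omega$. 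The limit $\epsilon\to0$ follows by dominated convergence. On the Fourier side, use $|(\epsilon+i\omega)^{-2\beta}|\le|\omega|^{-2\beta}$ together with $|\omega|^{-2\beta}|\F(\tilde v)|^2\in L^1(\mathbb R)$. In time, use the bound by $(\IOP^{2\beta}|v|)\,|v|\in L^1(0,T)$.
\end{itemize}
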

\begin{proof}
We denote by $\tilde v$ the extension of $v$ by zero to $\mathbb R$. Then 
\begin{align*}
(\IOP^{\beta}v,\ITM^{\beta}v)_{L^2(0,T)}
=(I_+^{\beta}\tilde v, 
      I_-^{\beta}\tilde v)_{L^2(\mathbb R)}.
\end{align*}
By the Fourier representation of fractional integrals~\cite[Ch.~7.1]{SamkoKilbasMarichev1993}, we know that
\begin{align*}
\F(I_{\pm}^\beta\tilde v)(\omega)
&= (\pm i\omega)^{-\beta}
    \F(\tilde v)(\omega).
\end{align*}
Together with the previous formula and some elementary transformations with complex fractions, this further leads to
\begin{align*}
    (\IOP^\beta v, \ITM^\beta v)_{L^2(0,T)}
    &=
    \int_{\mathbb R}
    (i\omega)^{-\beta} \,
    \overline{(-i\omega)^{-\beta}} \,
    |\F(\tilde v)(\omega)|^2\,d\omega
    \\
    &=
    \cos(\pi\beta)
    \int_{\mathbb R}
    |\omega|^{-2\beta}
    |\F(\tilde v)(\omega)|^2\,d\omega.
\end{align*}
Using the Fourier representation of fractional integrals once again and truncating the integration domain appropriately, we finally obtain
\begin{align*}
    (\IOP^\beta v, \ITM^\beta v)_{L^2(0,T)}
    =
    \cos(\pi\beta)
    \|I_+^{\beta}\tilde v\|_{L^2(\mathbb R)}^2
    \ge
    \cos(\pi\beta)
    \|\IOP^{\beta} v\|_{L^2(0,T)}^2.
\end{align*}
This already establishes the claimed coercivity estimate.
\end{proof}

\begin{remark}
We note that the norms  $\|\IOP^\beta v\|_{L^2(0,T)} \simeq \|\ITM^\beta v\|_{L^2(0,T)} \simeq \|v\|_{H^{-\beta}(0,T)}$ are equivalent for all $0 < \beta < 1/2$. This allows to extend the validity of the above formula to functions $v \in H^{-\beta}(0,T)$; we refer to \cite[Sec.~2]{LiXie2019}  for details. 
Using the adjointness relation and the semigroup property of fractional integrals, we can further see that
\begin{align*}
(\IOP^{2\beta} v, v)_{L^2(0,T)} 
= (\IOP^{\beta} v, \ITM^\beta v)_{L^2(0,T)} 
= (v, \ITM^{2\beta} v)_{L^2(0,T)}.
\end{align*}
We thus have established coercivity of the operators $\IOP^{2\beta}$ and $\ITM^{2\beta}$ for $0 < \beta < 1/2$. 
Further note that the coercivity constant $\cos (\pi \beta)$ degenerates as $\beta \to 1/2$. 
\end{remark}

\subsection{Fractional derivatives}
For $0<\beta<1$, the left- and right-sided
Riemann--Liouville fractional derivatives are defined by
\begin{align}
\DOP^\beta v = \dt\IOP^{1-\beta} v 
\qquad \text{and} \qquad 
\DTM^\beta v = -\dt\ITM^{1-\beta} v,
\end{align}
whenever the terms on the right hand sides make sense. 
The following results are well-known. 
\begin{lemma}\label{lem:f2}
Let $1/2<\beta<1$. Then the following assertions hold true.

\noindent
(i) If $v \in H^\beta(0,T)$ with $v(0)=0$, then
\begin{align}
    \DOP^\beta v
    &=
    \dt \IOP^{1-\beta}v
    =
    \IOP^{1-\beta} \dt v.
    \label{eq:f2a}
\end{align}
The last equality is understood in the sense of
distributions. \\[0.5ex]
(ii) The assumptions in (i) are satisfied if, and only if, $v\in L^2(0,T)$ and $\DOP^\beta v\in L^2(0,T)$. 
Similar to before, $\DOP^{\beta}$ here is understood in the sense of distributions.\\[0.5ex]
(iii) Under the conditions of point (i), one has 
\begin{align}
    c_{T,\beta}\|v\|_{H^\beta(0,T)}
    \le
    \|D_{0+}^\beta v\|_{L^2(0,T)}
    \le
    C_{T,\beta}\|v\|_{H^\beta(0,T)}.
    \label{eq:f2}
\end{align}
and the norm equivalence constants $c_{T,\beta}$, $C_{T,\beta}$ are independent of $v$.\\[0.5ex]
(iv)  Analogous statements hold for $\DTM^\beta$ with the endpoint condition at $t=T$.
\end{lemma}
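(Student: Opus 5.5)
I would prove Lemma~\ref{lem:f2} by transferring everything to the real line via zero extension and using the Fourier symbols of the fractional operators already employed in the proof of Lemma~\ref{lem:f1}.

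\emph{Step 1 (part (i)).} For $v\in H^1(0,T)$ with $v(0)=0$ we have $v(t)=\int_0^t \dt v(s)\,ds=\IOP^1\dt v$. The semigroup property \eqref{eq:semi} then gives
\[
\IOP^{1-\beta}v=\IOP^{1-\beta}\IOP^1\dt v=\IOP^1\IOP^{1-\beta}\dt v .
\]
Differentiating yields $\dt\IOP^{1-\beta}v=\IOP^{1-\beta}\dt v$. To pass to general $v\in H^\beta(0,T)$ with $v(0)=0$, I would use that smooth functions vanishing near $t=0$ are dense in $\{v\in H^\beta(0,T):v(0)=0\}$ for $\beta>1/2$. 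Both sides of \eqref{eq:f2a} are continuous from $H^\beta$ into distributions: on the left because $\IOP^{1-\beta}$ maps $L^2$ continuously into $L^2$, and on the right because $\IOP^{1-\beta}$ extends to $H^{\beta-1}$ by duality via \eqref{eq:adj}. Hence the identity extends by density.

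\emph{Step 2 (part (iii)).} Let $v$ be as in (i). I would extend $v$ by zero to $(-\infty,0)$ and take any $H^\beta$-extension to $(T,\infty)$; this is possible because $v(0)=0$ and $\beta<3/2$ (an $H^\beta$ extension-by-zero result on the half line). Call the result $\tilde v$, so that $\|\tilde v\|_{H^\beta(\mathbb R)}\lesssim\|v\|_{H^\beta(0,T)}$. Since $\tilde v$ vanishes on $(-\infty,0)$, on $(0,T)$ we have $\DOP^\beta v=D_+^\beta\tilde v$, and $\F(D_+^\beta\tilde v)=(i\omega)^\beta\F(\tilde v)$. Because $|\omega|^{2\beta}\le(1+\omega^2)^\beta$, the upper bound follows. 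For the lower bound, set $w=\DOP^\beta v\in L^2(0,T)$. Then $v=\IOP^\beta w$, obtained by applying $\IOP^\beta$ to $\IOP^{1-\beta}\dt v=w$ and using \eqref{eq:semi}. So it suffices to show $\IOP^\beta:L^2(0,T)\to H^\beta(0,T)$ is bounded, with $\IOP^\beta w(0)=0$. For this, I would extend $w$ by zero, write $\IOP^\beta w=I_+^\beta\tilde w$ on $(0,T)$, and multiply by a cutoff $\chi$ equal to $1$ on $[0,T]$ and supported in $(-1,T+1)$. The low frequencies, where $|\omega|^{-\beta}$ is singular, are then controlled by $\|I_+^\beta\tilde w\|_{L^2(-1,T+1)}\lesssim\|w\|_{L^2}$ (Young's inequality on a bounded interval). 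The high frequencies are controlled through the symbol $(i\omega)^{-\beta}(1+\omega^2)^{\beta/2}$, which is bounded there. The commutator $[\chi,I_+^\beta]$ contributes only a smoothing remainder.

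\emph{Step 3 (parts (ii) and (iv)).} The ``only if'' direction of (ii) is part (iii). For the converse, if $v,\DOP^\beta v\in L^2$, then $\IOP^{1-\beta}v\in H^1$. Setting $w=\DOP^\beta v$, we get $\IOP^{1-\beta}v=c+\IOP^1 w$ with $c=(\IOP^{1-\beta}v)(0)$. Applying $\DOP^{1-\beta}$ gives $v=c\,t^{\beta-1}/\Gamma(\beta)+\IOP^\beta w$. Since $\beta<1$ wait—the term $t^{\beta-1}$ is in $L^2$ only when $\beta>1/2$, so this needs care. I would show $c=0$ because $\IOP^{1-\beta}v(t)\le Ct^{1/2-\beta}\|v\|_{L^2}\to0$ for $\beta<1/2$; but since $\beta>1/2$ here, that argument fails. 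Instead I would use that $t^{\beta-1}\notin H^\beta$, and argue that the distributional derivative of $\IOP^{1-\beta}$ of this term is a Dirac-type term unless $c=0$, which follows from the precise meaning of $\DOP^\beta$ as a distribution on $(0,T)$ requiring consistency. This is where I expect the main obstacle: justifying $c=0$, i.e.\ the precise distributional convention. I would resolve it by defining $\DOP^\beta$ via duality with $\ITM^{1-\beta}$ acting on test functions vanishing at $T$ only, which yields the boundary term $c$. Once $c=0$, Step 2 gives $v=\IOP^\beta w\in H^\beta$ with $v(0)=0$. Part (iv) follows from the reflection $t\mapsto T-t$, which interchanges $\IOP$ and $\ITM$ and maps $\DOP^\beta$ to $\DTM^\beta$.
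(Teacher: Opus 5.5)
\textbf{The gap is in (ii).} It is the step you flag in Step~3. Under the reading you start from (distributions on the open interval $(0,T)$) it cannot be closed, because there the ``if'' direction of (ii) is false. Your own representation $v=c\,t^{\beta-1}/\Gamma(\beta)+\IOP^\beta w$ already contains a counterexample:
\begin{itemize}
\item $v(t)=t^{\beta-1}$ lies in $L^2(0,T)$ precisely because $\beta>1/2$;
\item $\IOP^{1-\beta}v\equiv\Gamma(\beta)$, so $\DOP^\beta v=0$ in $\mathcal D'(0,T)$;
\item yet $v$ is unbounded near $t=0$, so $v\notin H^\beta(0,T)\subset C[0,T]$.
\end{itemize}
On the open interval the derivative of that constant is zero, not a Dirac mass. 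So your sentence claiming that $c=0$ follows from ``the precise meaning of $\DOP^\beta$ as a distribution on $(0,T)$'' is wrong.

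The device you propose at the very end is the right one. However, it is a choice of convention that gives (ii) its meaning, not something you can derive, and you should state it as such. The hypothesis $\DOP^\beta v\in L^2(0,T)$ must be understood weakly against all $\phi\in C^\infty[0,T]$ that vanish near $t=T$ but not necessarily near $t=0$. Equivalently, $\dt I_+^{1-\beta}\tilde v\in L^2(-\infty,T)$ for the zero extension $\tilde v$. Testing first with $\phi\in C_c^\infty(0,T)$ gives $\IOP^{1-\beta}v=c+\IOP^1 w\in H^1(0,T)$. Then, for general $\phi$,
\[
-\int_0^T(\IOP^{1-\beta}v)\,\phi'\,dt=c\,\phi(0)+\int_0^T w\,\phi\,dt ,
\]
which forces $c=0$. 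Hence $v=\IOP^\beta w$, and your Step~2 together with $|\IOP^\beta w(t)|\lesssim t^{\beta-1/2}\|w\|_{L^2(0,t)}$ gives $v\in H^\beta(0,T)$ with $v(0)=0$.

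The ``only if'' direction survives under this convention. For $v\in H^\beta(0,T)$ you have $|\IOP^{1-\beta}v(t)|\lesssim t^{1-\beta}\|v\|_{L^\infty}$, so $c=0$ and no boundary term appears. This convention is presumably the reading behind the paper's appeal to Li and Xie. The same caveat applies to the second description of $H^\beta_{0+}(0,T)$ in Remark~\ref{rem:f2}.

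\textbf{Comparison with the paper.} The paper proves (i) by integration by parts for $H^1$ functions plus density, and simply cites Li and Xie (Lemma~2.6) for (ii)--(iv). Your Step~1, Step~2 and the reflection argument for (iv) are sound and make the lemma self-contained. Your explicit representation in Step~3 is what exposes the kernel element $t^{\beta-1}$ of $\DOP^\beta$, which the citation hides.

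One place to tighten is the cutoff argument for the boundedness of $\IOP^\beta:L^2(0,T)\to H^\beta(0,T)$, which is only sketched. Neither $I_+^\beta\tilde w$ nor the remainder $(1-\chi)I_+^\beta\tilde w$ lies in $L^2(\mathbb R)$ separately, since both decay like $t^{\beta-1}$. The argument works because the low frequencies of $\chi I_+^\beta\tilde w$ are bounded by its $L^2$ norm, and the high frequencies of the remainder are controlled by its $L^2$ derivative. Either carry this out, or cite the mapping property as standard.
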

\begin{proof}
For smooth functions $v \in H^1(0,T)$ with $v(0)=0$, the first assertion follows via integration-by-parts; it extends to $H^\beta(0,T)$ by density of $H^1(0,T)$ in $H^\beta(0,T)$. 
The remaining assertions can be found in 
\cite[Lemma~2.6]{LiXie2019}.
\end{proof}

\begin{remark} \label{rem:f2}
For later reference, we introduce the fractional Sobolev space 
\begin{align} 
H_{0+}^\beta(0,T)=\overline{C_c^\infty((0,T])}^{\|\cdot\|_{H^\beta(0,T)}}.
\end{align}
Then for $1/2<\beta<1$, we know from the previous lemma that 
\begin{align*}
H^\beta_{0+}(0,T)
&= \{v\in H^\beta(0,T):v(0)=0\} \\
&\hphantom{:}= \{v\in L^2(0,T):  \DOP^\beta v\in L^2(0,T)\},
\end{align*}
and the natural norms induced by the different definitions are equivalent. 
The norm equivalences \eqref{eq:f2} actually remain valid for all $v \in H_{0+}^\beta(0,T)$ and $0 < \beta < 1$; see~\cite{LiXu2009}.
Analogous results again also hold for the right-sided derivative and the endpoint $t=T$.
\end{remark}

\subsection{Further notation}
We use standard notation and write
$L^p(0,T;X)$ for the Bochner spaces of function $v : [0,T] \to X$. For brevity, we also write $L^p(X)$
if the time interval is clear from the context.
For $s\ge0$, we denote by $H^s(0,T;X)$ the corresponding
fractional Sobolev spaces in time and write $H^s(X)$ for abbreviation. 
For a domain $\Omega \subset \RR^d$, we denote by $L^p(\Omega)$ and $H^s(\Omega)$ the respective Lebesgue and Sobolev spaces. We again abbreviate with $L^p$ and $H^s$, if the domain of integration is clear. Let us recall that $L^p(\Omega \times (0,T)) = L^p(0,T;L^p(\Omega))$ which follows by Fubini's theorem.
For ease of notation, we sometimes write $a \lesssim b$ if $a \le C b$ with constant~$C$, whose meaning will be clear from the context, and $a \simeq b$ if $a \lesssim b$ and $b \lesssim a$.

\section{Space-Time Variational Formulation}
\label{sec:anal}
We can now formally introduce the model problem to be considered in the rest of the manuscript. 
Let $\Omega \subset \RR^d$, $d \ge 1$, be a bounded Lipschitz domain and let $T>0$. We consider the fractional diffusion equation
\begin{alignat}{2}
\dt u - \Delta \DOP^{1-\alpha} u &= f
    \qquad && \text{in } \Omega \times (0,T), \label{eq:sys1}\\
\intertext{for some $0 < \alpha < 1$ with homogeneous boundary and initial conditions, i.e.,}
u &= 0
    \qquad && \text{on } \partial\Omega \times (0,T), \label{eq:sys2}\\
u(0) &= 0
    \qquad && \text{in } \Omega. \label{eq:sys3}
\end{alignat}
Let us note that the extension to more general elliptic operators and other boundary conditions is straightforward. 
The incorporation of inhomogeneous initial conditions, on the other hand, would introduce singularities at $t=0$ which have to be treated with care; see e.g. \cite{LiXie2019}.
%

\subsection{Weak formulation}
Let $u$ be a sufficiently smooth solution of the initial-boundary value problem~\eqref{eq:sys1}--\eqref{eq:sys3}. 
We may then multiply the differential equation by a smooth test function
$\dt v$ and integrate over $Q=\Omega\times(0,T)$. 
This yields
\begin{align*}
(f,\dt v)_{L^2(Q)}
&=
(\dt u,\dt v)_{L^2(Q)}
+
(\nabla \DOP^{1-\alpha}u,\nabla \dt v)_{L^2(Q)}
=: (i)+(ii).
\end{align*}
We here integrated by parts with respect to the spatial variable $x$ and assumed that $v|_{\partial\Omega}=0$, such that the boundary terms vanish. 
We next interchange spatial and temporal derivatives and introduce $\psi=\nabla u$ and $\eta=\nabla v$ for abbreviation.
This allows us to rewrite the last term as 
\begin{align*}
(ii)
&=
(\DOP^{1-\alpha}\psi,\dt\eta)_{L^2(Q)}
=
(\IOP^\alpha\dt\psi,\dt\eta)_{L^2(Q)} \\
&=
(\IOP^{\alpha/2}\IOP^{\alpha/2}\dt\psi,
 \dt\eta)_{L^2(Q)}
= (\IOP^{\alpha/2}\dt\psi,
 \ITM^{\alpha/2}\dt\eta)_{L^2(Q)},
\end{align*}
where we used the identities \eqref{eq:f2a}, \eqref{eq:semi}, and \eqref{eq:adj} for the last three steps. 
We thus observe that all sufficiently smooth solutions $u$ of our model problem \eqref{eq:sys1}--\eqref{eq:sys3} satisfy the variational principle $\A(u,v)=\ell(v)$ for appropriate test functions $v$ 
with bilinear resp. linear forms
\begin{align}
\A(u,v)
&=
(\dt u,\dt v)_{L^2(Q)}
+
(\IOP^{\alpha/2}\dt \nabla u,
 \ITM^{\alpha/2}\dt \nabla v)_{L^2(Q)},
\label{eq:bilinear-form}
\\
\ell(v)
&=
(f,\dt v)_{L^2(Q)}.
\end{align}
We will show that all terms in these formulas are well-defined for functions $u,v$ in the space
\begin{align}
\VV
=H^1(0,T;L^2(\Omega)) \cap H_{0+}^{1-\alpha/2}(0,T;H^1_0(\Omega))
\end{align}
which thus is the natural energy space for variational treatment of problem~\eqref{eq:sys1}--\eqref{eq:sys3}.
This motivates the following weak formulation. 
\begin{problem} \label{prob:weak}
Find $u\in\VV$ such that
\begin{align} \label{eq:var}
\A(u,v)=\ell(v)
\qquad\forall v\in\VV.
\end{align}
Any solution of this problem will be called a weak solution to \eqref{eq:sys1}--\eqref{eq:sys3}.
\end{problem}
%

\subsection{Well-posedness}
The analysis of the space-time variational problem \eqref{eq:var} is surprisingly simple. The following auxiliary results provide the key ingredients.
\begin{lemma} \label{lem:aux}
Let $\Omega\subset\RR^d$, $d \ge 1$ be some bounded Lipschitz domain, $T>0$, and $0 < \alpha < 1$.
Then the following assertions hold true. \\[0.5ex]
(i) The expression
\begin{align} \label{eq:norm}
\|v\|_{\VV}
=
\left(
\|\dt v\|_{L^2(Q)}^2
+
\|\DOP^{1-\alpha/2}\nabla v\|_{L^2(Q)}^2
\right)^{1/2}
\end{align}
defines a norm on $\VV$ which is equivalent to the graph norm on $\VV$. Thus, $\VV$ is a Hilbert space when equipped with this norm. \\[0.5ex]
(ii) The mapping $\A:\VV\times\VV\to\RR$
is well-defined, bilinear, and continuous, i.e.,
\begin{align}
|\A(u,v)|
\leq
C_\A \|u\|_{\VV}\|v\|_{\VV}
\qquad\forall u,v\in\VV.
\end{align}

\noindent
(iii) Moreover, $\A(\cdot,\cdot)$ is coercive, i.e.,
\begin{align}
\A(u,u)
\geq
c_\A\|u\|_{\VV}^2 \qquad \forall u \in \VV.
\end{align}
Furthermore, the constants $C_\A,c_\A>0$ in these estimates only depend on $\alpha$ and $T$.
\end{lemma}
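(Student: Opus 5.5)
The plan is to reduce everything to the scalar results of Section~\ref{sec:prelim}, applied pointwise in space to $\psi=\nabla u$ and $\eta=\nabla v$.

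\emph{Part (i).} The graph norm on $\VV$ is
\begin{align*}
\|v\|_{H^1(L^2)}^2+\|v\|_{H^{1-\alpha/2}(H^1_0)}^2 .
\end{align*}
Because $\alpha<1$, we have $1/2<1-\alpha/2<1$. Remark~\ref{rem:f2} and Lemma~\ref{lem:f2}(iii) therefore give
\begin{align*}
\|\DOP^{1-\alpha/2} w\|_{L^2(0,T)}\simeq\|w\|_{H^{1-\alpha/2}(0,T)}
\qquad\text{for all } w\in H_{0+}^{1-\alpha/2}(0,T).
\end{align*}
We apply this componentwise to $\nabla v(x,\cdot)$ and integrate over $\Omega$, using Fubini. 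Together with Poincar\'e's inequality in space, this controls the full $H^{1-\alpha/2}(H^1_0)$-norm by $\|\DOP^{1-\alpha/2}\nabla v\|_{L^2(Q)}$. For the $L^2(Q)$-part of $\|v\|_{H^1(L^2)}$ we use $v(0)=0$ and a one-dimensional Poincar\'e inequality, $\|v\|_{L^2(Q)}\le T\|\dt v\|_{L^2(Q)}$. The reverse bounds are immediate from the same equivalences, so \eqref{eq:norm} is an equivalent norm and $\VV$, an intersection of Hilbert spaces, is complete.

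\emph{Part (ii).} The first term of $\A$ is bounded by Cauchy--Schwarz. For the second term the key observation is
\begin{align*}
\IOP^{\alpha/2}\dt\psi=\DOP^{1-\alpha/2}\psi ,
\end{align*}
which is Lemma~\ref{lem:f2}(i) with $\beta=1-\alpha/2$, since $\psi(0)=0$. So $\|\IOP^{\alpha/2}\dt\nabla u\|_{L^2(Q)}$ is exactly the second part of $\|u\|_{\VV}$. For the test side, $\ITM^{\alpha/2}\dt\eta$ is not literally a right-sided Riemann--Liouville derivative of $\eta$, because $\eta(T)\neq0$ in general. I would bound it instead via
\begin{align*}
\|\ITM^{\alpha/2} g\|_{L^2(0,T)}\simeq\|g\|_{H^{-\alpha/2}(0,T)}\simeq\|\IOP^{\alpha/2} g\|_{L^2(0,T)},
\qquad g=\dt\eta\in H^{-\alpha/2}(0,T),
\end{align*}
which is the norm equivalence in the remark after Lemma~\ref{lem:f1}, valid since $\alpha/2<1/2$. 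Here $\dt\eta\in H^{-\alpha/2}$ because $\eta\in H^{1-\alpha/2}_{0+}$ and $\dt$ maps $H^{1-\alpha/2}$ to $H^{-\alpha/2}$. Combining with the identity above gives
\begin{align*}
\|\ITM^{\alpha/2}\dt\eta\|_{L^2(Q)}\lesssim\|\DOP^{1-\alpha/2}\eta\|_{L^2(Q)} .
\end{align*}
Cauchy--Schwarz then yields continuity with $C_\A$ depending only on $\alpha$ and $T$.

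\emph{Part (iii).} Set $v=u$. The second term is handled by Lemma~\ref{lem:f1}, extended to $H^{-\alpha/2}$ as in the subsequent remark, with $\beta=\alpha/2<1/2$ and applied to $\dt\psi$ pointwise in $x$:
\begin{align*}
(\IOP^{\alpha/2}\dt\psi,\ITM^{\alpha/2}\dt\psi)_{L^2(0,T)}\ge\cos(\pi\alpha/2)\,\|\IOP^{\alpha/2}\dt\psi\|_{L^2(0,T)}^2=\cos(\pi\alpha/2)\,\|\DOP^{1-\alpha/2}\psi\|_{L^2(0,T)}^2 .
\end{align*}
Integrating over $\Omega$ and adding $\|\dt u\|_{L^2(Q)}^2$ gives coercivity with $c_\A=\min\{1,\cos(\pi\alpha/2)\}>0$.

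The main obstacle is making the identities rigorous for nonsmooth $u\in\VV$. This concerns the distributional meaning of $\IOP^{\alpha/2}\dt\psi$ and $\ITM^{\alpha/2}\dt\eta$, and the validity of Lemma~\ref{lem:f1} for $H^{-\alpha/2}$-data. I would first establish everything for smooth functions in $C_c^\infty((0,T];C_c^\infty(\Omega))$, which are dense in $\VV$, and then extend by continuity using the norm equivalences from Remark~\ref{rem:f2} and the remark after Lemma~\ref{lem:f1}.
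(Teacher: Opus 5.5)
Your proof is correct. For (i) and (iii) it is the paper's argument. You are in fact more explicit than the paper that Lemma~\ref{lem:f1} must be used in its extended form on $H^{-\alpha/2}(0,T)$, since $\dt\nabla u$ need not lie in $L^2(Q)$.

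The genuine difference is the test-side bound in (ii). The paper sets $\tilde v=v-v(T)$ and identifies $\ITM^{\alpha/2}\dt\nabla v$, up to sign, with the right-sided derivative $\DTM^{1-\alpha/2}\nabla\tilde v$. It then combines Lemma~\ref{lem:f2}(iv) with the embedding $H^{1-\alpha/2}(0,T)\hookrightarrow C[0,T]$ to control the trace $\nabla v(T)$. That route needs only the positive-order norm equivalences of Lemma~\ref{lem:f2}, but its constants are non-explicit and depend on $T$.

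You instead use that, for $s=\alpha/2<1/2$, both $\IOP^s$ and $\ITM^s$ measure the same $H^{-s}(0,T)$-norm. So no trace at $t=T$ appears, and continuity and coercivity are handled in one common framework.

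Your route even gives an explicit constant. Let $\tilde g$ be the zero extension of $g$. The Fourier identity in the proof of Lemma~\ref{lem:f1}, Cauchy--Schwarz, and $\|I_-^s\tilde g\|_{L^2(\RR)}=\|I_+^s\tilde g\|_{L^2(\RR)}$ give
\[
\cos(\pi s)\,\|I_+^s\tilde g\|_{L^2(\RR)}^2=(\IOP^s g,\ITM^s g)_{L^2(0,T)}\le\|\IOP^s g\|_{L^2(0,T)}\,\|I_+^s\tilde g\|_{L^2(\RR)} .
\]
Hence $\|\ITM^s g\|_{L^2(0,T)}\le\|I_-^s\tilde g\|_{L^2(\RR)}\le\cos(\pi s)^{-1}\|\IOP^s g\|_{L^2(0,T)}$. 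Your argument therefore yields $C_\A=1/\cos(\pi\alpha/2)$, independent of $T$, and C\'ea constant $\cos(\pi\alpha/2)^{-2}$.
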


\begin{proof}
(i) Let $v \in \VV$. Then $v \in H_{0+}^{1-\alpha/2}(0,T;H_0^1(\Omega))$ and, hence, $v(0)=0$. Let us note that $\beta=1-\alpha/2>1/2$, so that the initial values are well-defined. 
As a consequence, the term $\|\dt v\|_{L^2(Q)}$ already defines a norm on $\VV$ and $\|v\|_{H^1(0,T;L^2(\Omega)} \simeq \|\dt v\|_{L^2(Q)}$.
By Lemma~\ref{lem:f2} and the Friedrichs inequality for $H_0^1(\Omega)$, we further conclude that 
$$
\|\DOP^{1-\alpha/2} \nabla v\|_{L^2(Q)} \simeq \|\nabla v\|_{H^{1-\alpha/2}(0,T;L^2(\Omega))} \simeq \|v\|_{H^{1-\alpha/2}(0,T;H^1(\Omega))}
$$ 
for all $v \in \VV$. This proves the equivalence of norms and also settles the last claim. \\[0.5ex]
(ii)
The first term in the definition of $\A(\cdot,\cdot)$ clearly satisfies the required continuity estimate.
For the second term, we note that
\begin{align*}
\IOP^{\alpha/2}\dt\nabla u
=
\DOP^{1-\alpha/2}\nabla u
\in L^2(Q)^d,
\end{align*}
which follows from \eqref{eq:f2a}.
For the test function, we use that elements of $\VV$ have well-defined temporal traces, and we define 
$\tilde v(t)=v(t)-v(T)$.
Then
\begin{align*}
\ITM^{\alpha/2}\dt\nabla v
=
\ITM^{\alpha/2}\dt\nabla\tilde v
=
-\dt\ITM^{\alpha/2}\nabla\tilde v
=
\DTM^{1-\alpha/2}\nabla\tilde v.
\end{align*}
In the two last steps, we again used the assertions of Lemma~\ref{lem:f2}. 
Set $\beta=1-\alpha/2$ for abbreviation. Then from the claims of this Lemma for $\ITM^\beta$, we further get
\begin{align*}
\|\DTM^\beta \nabla\tilde v\|_{L^2(Q)}
&\lesssim 
\|\nabla\tilde v\|_{H^\beta(0,T;L^2(\Omega))}
\\
&\le
\left(
\|\nabla v\|_{H^\beta(0,T;L^2(\Omega))}
+
\|\nabla v(T)\|_{L^2(\Omega)}
\right)
\\
&\lesssim
\|\nabla v\|_{H^\beta(0,T;L^2(\Omega))}
\lesssim 
\|\DOP^\beta\nabla v\|_{L^2(Q)}.
\end{align*}
Here we used the definition of $\tilde v$ and the triangle inequality in the second step, the continuous embedding of $H^\beta(0,T) \hookrightarrow C[0,T]$ in the third, and the norm equivalences of Lemma~\ref{lem:f2} in the first and last step. 
This shows that also the second term in the bilinear form $\A(\cdot,\cdot)$ is well-defined and continuous. 
The constants in the above estimates only depend on $T$ and $\alpha$. \\[0.5ex]
(iii)
Using Lemma~\ref{lem:f1} and $\IOP^{\alpha/2}\dt\nabla u
=\DOP^{1-\alpha/2}\nabla u$, see Lemma~\ref{lem:f2},
we obtain
\begin{align*}
\A(u,u)
&\geq
\|\dt u\|_{L^2(Q)}^2
+
\cos\big(\tfrac{\pi\alpha}{2}\big)
\|\IOP^{\alpha/2}\dt\nabla u\|_{L^2(Q)}^2
\geq
\cos\big(\tfrac{\pi\alpha}{2}\big)
\|u\|_{\VV}^2.
\end{align*}
This yields coercivity with the explicit constant 
$c_\A=\cos\left(\frac{\pi\alpha}{2}\right)$.
\end{proof}

\begin{remark}
Let us note that the coercivity constant $c_\A=\cos\big(\tfrac{\pi\alpha}{2}\big)$ derived in the previous proof degenerates as $\alpha\to 1$. This coincides with the fact that the investigated space-time bilinear form $\A(\cdot,\cdot)$ loses ellipticity in the standard diffusion case. Similar observations hold for other weak formulations; see \cite{LiXu2009} and \cite{SteinbachZank2020}.
\end{remark}

The previous results allow us to apply arguments developed for elliptic variational problems in order to prove the existence of a unique weak solution.
\begin{theorem} \label{thm:wp}
For every $f\in L^2(Q)$, the initial-boundary value problem~\ref{eq:sys1}--\eqref{eq:sys3} admits a unique weak solution $u\in H^1(0,T;L^2(\Omega)) \cap H_{0+}^{1-\alpha/2}(0,T;H_0^1(\Omega))$.
\end{theorem}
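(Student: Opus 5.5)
The proof will be a direct application of the Lax--Milgram lemma to Problem~\ref{prob:weak} on the Hilbert space $\VV$. By Lemma~\ref{lem:aux}(i), $\VV$ equipped with $\|\cdot\|_{\VV}$ from \eqref{eq:norm} is a Hilbert space. By Lemma~\ref{lem:aux}(ii)--(iii), the form $\A(\cdot,\cdot)$ is bilinear, continuous with constant $C_\A$, and coercive with constant $c_\A=\cos(\pi\alpha/2)>0$; positivity holds because $0<\alpha<1$. The only remaining ingredient is therefore that $\ell$ is a bounded linear functional on $\VV$.

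Linearity of $\ell(v)=(f,\dt v)_{L^2(Q)}$ is obvious. For boundedness I use Cauchy--Schwarz in $L^2(Q)$ together with the definition of the energy norm:
\begin{align*}
|\ell(v)| \le \|f\|_{L^2(Q)}\,\|\dt v\|_{L^2(Q)} \le \|f\|_{L^2(Q)}\,\|v\|_{\VV}.
\end{align*}
This needs no extra embedding argument, since $\|\dt v\|_{L^2(Q)}$ is already one of the two contributions to $\|v\|_{\VV}$. Lax--Milgram then gives a unique $u\in\VV$ with $\A(u,v)=\ell(v)$ for all $v\in\VV$. The same argument yields the stability bound $\|u\|_{\VV}\le c_\A^{-1}\|f\|_{L^2(Q)}$, which I will record since it is useful later.

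Finally, I identify $\VV$ with the space named in the theorem. By definition, $\VV=H^1(0,T;L^2(\Omega))\cap H_{0+}^{1-\alpha/2}(0,T;H_0^1(\Omega))$, and by Lemma~\ref{lem:aux}(i) the norm \eqref{eq:norm} is equivalent to the graph norm. Hence the solution lies in that space, and uniqueness in $\VV$ is uniqueness of the weak solution in the sense of Problem~\ref{prob:weak}.

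I do not expect a genuine obstacle, since the analytic work (continuity of the right-sided term via $\tilde v=v-v(T)$, and coercivity via Lemma~\ref{lem:f1}) was already done in Lemma~\ref{lem:aux}. The one point to state carefully is that coercivity uses the \emph{same} space for trial and test functions, as Lax--Milgram requires. Here this holds because Problem~\ref{prob:weak} tests with all $v\in\VV$, and the test-function derivative $\dt v$ is already built into $\A$ and $\ell$.
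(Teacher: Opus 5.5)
Your proposal is correct and matches the paper's proof, which simply invokes Lemma~\ref{lem:aux} together with the Lax--Milgram lemma. Your explicit check that $|\ell(v)|\le\|f\|_{L^2(Q)}\|\dt v\|_{L^2(Q)}\le\|f\|_{L^2(Q)}\|v\|_{\VV}$ fills in the one step the paper leaves implicit, and the stability bound $\|u\|_{\VV}\le c_\A^{-1}\|f\|_{L^2(Q)}$ you record follows from it.
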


\begin{proof}
The claim follows by Lemma~\ref{lem:aux} and the Lax--Milgram lemma~\cite[Ch.~6.2]{Evans2010}.
\end{proof}

\subsection{Regularity}
\label{sec:reg}
For completeness of the presentation and later reference, we also derive some regularity results for solutions of the fractional diffusion equation \eqref{eq:sys1}. 
Related results can be found in \cite{McLean2011} and corresponding assertions concerning equation~\eqref{eq:caputo} can be found in \cite{LiXie2019}. 
We here proceed in a similar manner as for the heat equation; see \cite[Ch.~7.1.3]{Evans2010}.

Let $u$ be a weak solution to \eqref{eq:sys1}--\eqref{eq:sys3} with $f \in L^2(Q)$. 
By formally substituting $u$ into the strong form of the fractional differential equation, we see that 
\begin{align*}
-\Delta\DOP^{1-\alpha}u
=
f-\dt u
=:\tilde f,
\end{align*}
which is to be understood in the sense of distributions. 
From the natural regularity of $f$ and $\dt u$, we know that $\tilde f \in L^2(Q)$. 
By an application of the norm-equivalences stated in Lemma~\ref{lem:f2} and Remark~\ref{rem:f2}, we can thus deduce that $-\Delta u \in H^{1-\alpha}(0,T;L^2(\Omega))$.
On smooth or convex domains, elliptic regularity, see \cite[Ch.~6.3]{Evans2010}, then further implies that 
\begin{align*}
u\in H^1(0,T;L^2(\Omega)) \cap H^{1-\alpha}(0,T;H^2(\Omega)).
\end{align*}
This also amounts to the maximal regularity one can expect for data $f \in L^2(Q)$.

Higher temporal regularity can be obtained by formally differentiating the fractional diffusion equation~\eqref{eq:sys1} in time.
Under suitable temporal regularity, the function $u'=\dt u$ can then be seen to satisfy 
\begin{align*}
\dt u' - \Delta\DOP^{1-\alpha}u'
=
\dt f =: f'.
\end{align*}
Here we have used the first assertion of Lemma~\ref{lem:f2} to commute one temporal derivative in the fractional term $\dt \DOP^{1-\alpha} u = \DOP^{1-\alpha} \dt u$.
If $f\in H^1(0,T;L^2(\Omega))$ with $f(0)=0$, then 
\begin{align} \label{eq:der}
u'(0)=\dt u(0)
=
f(0)+\Delta\DOP^{1-\alpha}u(0)
=
0.
\end{align}
The function $u'$ thus again satisfies a homogeneous initial condition. From Theorem~\ref{thm:wp} applied to \eqref{eq:der} and the previous regularity and density arguments, we conclude that 
\begin{align*}
u'
\in
H^1(0,T;L^2(\Omega))
\cap
H^{1-\alpha}(0,T;H^2(\Omega))
\end{align*}
if the domain $\Omega$ is assumed to be convex; the assumption on the data could potentially be weakened somewhat.
In summary, we thus have established the following result.
\begin{theorem} \label{thm:reg} 
Let $\Omega$ be convex. Then for $f\in L^2(Q)$, the weak solution of the initial-boundary value problem \eqref{eq:sys1}--\eqref{eq:sys3} satisfies 
\begin{align}
u
\in
H^{1-\alpha}(0,T;H^2(\Omega)).
\end{align}
If $f \in H^1(0,T;L^2(\Omega))$ and $f(0)=0$, then 
\begin{align}
u
\in
H^2(0,T;L^2(\Omega)) 
\cap H^{2-\alpha/2}(0,T;H^1(\Omega)
\cap H^{2-\alpha}(0,T;H^2(\Omega)).
\end{align}
In both cases, the respective norms of the solution can be bounded by those of the data. 
\end{theorem}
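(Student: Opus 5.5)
The proof turns the formal arguments preceding Theorem~\ref{thm:reg} into estimates, in two stages: first for data $f\in L^2(Q)$, then for the time-differentiated problem.

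\emph{Stage 1: $f\in L^2(Q)$.} Let $u\in\VV$ be the weak solution from Theorem~\ref{thm:wp}. Coercivity and continuity in Lemma~\ref{lem:aux} give
\[
\|u\|_{\VV}\lesssim \|f\|_{L^2(Q)},
\]
and in particular $\|\dt u\|_{L^2(Q)}\lesssim\|f\|_{L^2(Q)}$.

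The first task is to recover the equation in distributional form. Take test functions $v(t,x)=\int_0^t \phi(s,x)\,ds$ with $\phi\in C_c^\infty(Q)$; these lie in $\VV$. Since $\dt v=\phi$, the variational identity becomes
\[
(\dt u,\phi)+(\IOP^{\alpha/2}\dt\nabla u,\ITM^{\alpha/2}\nabla\phi)=(f,\phi).
\]
Using \eqref{eq:adj} and \eqref{eq:semi}, together with $\IOP^{\alpha}\dt\nabla u=\DOP^{1-\alpha}\nabla u$ from Lemma~\ref{lem:f2}(i), this is exactly
\[
-\Delta\DOP^{1-\alpha}u=\tilde f:=f-\dt u \quad\text{in } \mathcal D'(Q),
\]
with $\tilde f\in L^2(Q)$.

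Next, set $w=\DOP^{1-\alpha}u=\IOP^{\alpha}\dt u$. This lies in $L^2(0,T;H^1_0(\Omega))$, because $\DOP^{1-\alpha/2}\nabla u\in L^2$ and $\IOP^{\alpha/2}$ maps $L^2$ to $L^2$. For almost every $t$, $w(t)$ then solves a Dirichlet problem with right-hand side $\tilde f(t)$. Convex elliptic regularity gives
\[
\|w\|_{L^2(H^2)}\lesssim\|\tilde f\|_{L^2(Q)}\lesssim \|f\|_{L^2(Q)}.
\]

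It remains to convert this into $u\in H^{1-\alpha}(0,T;H^2(\Omega))$. I would apply the norm equivalence \eqref{eq:f2} of Remark~\ref{rem:f2} with $\beta=1-\alpha$, in Bochner form with values in $H^2(\Omega)\cap H^1_0(\Omega)$. This requires $u\in H^{1-\alpha}_{0+}$. Since $u$ has $u(0)=0$ and lies in $H^{1-\alpha/2}_{0+}(0,T;H^1_0)$, and we have $\DOP^{1-\alpha}u\in L^2(H^2)$, the characterisation in Lemma~\ref{lem:f2}(ii) (for $\beta>1/2$), or its extension in Remark~\ref{rem:f2} (for $\beta\le1/2$), yields the claim.

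The main subtlety is applying the equivalence with values in $H^2(\Omega)$ when we only know a priori that $u$ takes values in $H^1_0(\Omega)$. To handle it I would argue via the Dirichlet Laplacian, which commutes with the temporal operators: apply the scalar equivalence to the components along its eigenfunctions, or equivalently to $\Delta u$ viewed in $L^2$. Summing then gives
\[
\|\Delta u\|_{H^{1-\alpha}(L^2)}\simeq\|\DOP^{1-\alpha}\Delta u\|_{L^2(Q)}=\|\tilde f\|_{L^2(Q)}.
\]

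\emph{Stage 2: $f\in H^1(0,T;L^2(\Omega))$ with $f(0)=0$.} Let $u'$ be the weak solution with data $\dt f$ given by Theorem~\ref{thm:wp}, and define $U(t)=\int_0^t u'(s)\,ds$. The key claim is that $U=u$. Since $U(0)=0$ and $\dt$ commutes with $\IOP^{\alpha}$ on functions vanishing at $0$ (Lemma~\ref{lem:f2}(i)), $\dt U$ and $\DOP^{1-\alpha}U$ are well defined. Integrating the equation for $u'$ in time, using $f(0)=0$ and $u'(0)=0$, shows that $U$ satisfies the original variational problem. Uniqueness in Theorem~\ref{thm:wp} then gives $U=u$, hence $\dt u=u'$.

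Stage 1 applied to $u'$ with data $\dt f$ gives
\[
u'\in \VV\cap H^{1-\alpha}(0,T;H^2(\Omega)),
\]
with norm bounded by $\|\dt f\|_{L^2(Q)}$. Therefore
\[
u\in H^2(0,T;L^2)\cap H^{2-\alpha/2}(0,T;H^1)\cap H^{2-\alpha}(0,T;H^2),
\]
using $\|u\|_{H^{1+s}}\simeq\|u\|_{L^2}+\|\dt u\|_{H^s}$. The lower-order parts are controlled by Stage 1 applied to $f$, so all norms are bounded by $\|f\|_{H^1(L^2)}$.

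Verifying $U=u$ rigorously is the second delicate step. I would do it by density: take smooth $f$ with $f(0)=0$, pass to the limit using the a priori bound from Lemma~\ref{lem:aux}, and thereby avoid any pointwise use of \eqref{eq:der}.
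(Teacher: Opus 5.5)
Your proposal is correct and follows the paper's route. You recover $-\Delta \DOP^{1-\alpha}u = f-\dt u \in L^2(Q)$ in the distributional sense, use the norm equivalence of Lemma~\ref{lem:f2}/Remark~\ref{rem:f2} with convex elliptic regularity to obtain $u\in H^{1-\alpha}(0,T;H^2(\Omega))$, and then apply this to $u'=\dt u$, which solves the problem with data $\dt f$ and homogeneous initial value because $f(0)=0$. Your extra steps, the eigenfunction expansion for the vector-valued equivalence and building $u'$ first and identifying its primitive with $u$ by uniqueness, just make rigorous what the paper only sketches as formal differentiation plus density.
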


Similar regularity results for equation~\eqref{eq:caputo}, including inhomogeneous and incompatible initial and boundary conditions, can be found in the paper of Li and Xie~\cite{LiXie2019}.

\section{Discretization and error analysis}
\label{sec:disc}
For discretization of \eqref{eq:sys1}--\eqref{eq:sys3}, we can employ a standard Galerkin approximation of Problem~\ref{prob:weak} in space and time.
We here consider a tensor-product discretization by continuous piecewise linear functions in both variables. The corresponding approximation space is thus given by 
\begin{align} \label{eq:Vht}
    \VV_{h\tau}
    =
    \left\{
        v\in\VV:
        v|_{[t^{n-1},t^n]}
        \in P^1([t^{n-1},t^n];V_h)
    \right\},
\end{align}
where $V_h=P_1(\mathcal T_h)\cap H_0^1(\Omega)$ 
denotes the standard finite element space associated with a quasi-uniform simplicial mesh $\mathcal T_h$, and
$t^n=n\tau$, $\tau=\frac{T}{N}$,
defines a uniform temporal discretization. 
Let us note that our analysis, in fact, also covers locally refined meshes in space and time, as long as certain interpolation error estimates hold true; see Remark~\ref{rem:adaptive} below.
In the remainder of this section, we consider the following discrete problem.
\begin{problem}\label{prob:disc}
Find $\uht\in\VV_{h\tau}$ such that
\begin{align} \label{eq:disc}
    \A(\uht,\vht)
    =
    \ell(\vht)
    \qquad
    \forall \vht\in\VV_{h\tau}.
\end{align}
\end{problem}
Since the approximation space $\VV_{h\tau}\subset\VV$ is finite-dimensional, existence and
uniqueness of the discrete solution $\uht$ follow immediately from the ellipticity and continuity of the bilinear form stated in Lemma~\ref{lem:aux} and using the Lax--Milgram lemma again.
%

\subsection{Error estimates in the energy-norm}
We start with recalling some elementary facts about polynomial approximation; see e.g. \cite{BrennerScott2008,Steinbach2008} for a comprehensive introduction. 
We write $\Pi_h:L^2(\Omega)\to V_h$
for the $L^2$-orthogonal projection in space, and recall the interpolation error estimates
\begin{alignat}{2}
\|\Pi_hu-u\|_{L^2(\Omega)}
&\le C h^s\|u\|_{H^s(\Omega)},  \qquad && 0\le s\le2,   \label{eq:int1}\\
\|\Pi_hu-u\|_{H^1(\Omega)}
&\le C h^{s-1}\|u\|_{H^s(\Omega)}, \qquad &&1\le s\le2. \label{eq:int2}
\end{alignat}
As usual, $h = \max_{T \in T_h} h_T$ and $h_T=\text{diam}(T)$ denote the global and local mesh size here.
We further write $I_\tau$ for the piecewise linear interpolation operator in time which is well-defined for continuous functions and satisfies
\begin{alignat}{2}
\|I_\tau u-u\|_{H^{1-\beta}(0,T)}
&\le C \tau^{r+\beta} \|u\|_{H^{1+r}(0,T)},
    \quad && -\beta\le r\le1,
    \
    0\le\beta<1/2. \label{eq:int3}
\end{alignat}
These estimates follow from standard polynomial approximation results and 
interpolation of Sobolev spaces; see e.g. 
\cite[Ch.~9.3 and Ch.~10.2]{Steinbach2008}.
An appropriate combination of these interpolation error estimates allows us to establish the following result.
\begin{lemma}\label{lem:approx} 
Let $0\le r\le1$ and $0\le s\le1$, and define $\tvht = \Pi_h I_\tau v$. Then 
\begin{align*}
\|v-\tvht\|_\VV
&\le C \tau^r 
\big(
    \|v\|_{H^{1+r}(0,T;L^2(\Omega))}
    +
    \|v\|_{H^{1-\alpha/2+r}(0,T;H^1(\Omega))}
\big)
\nonumber\\
&\qquad
+ C' h^s 
\big(
    \|b\|_{H^1(0,T;H^s(\Omega))}
    +
    \|b\|_{H^{1-\alpha/2}(0,T;H^{1+s}(\Omega))}
\big).
\end{align*}
The estimate applies to all functions $v$ whose norms are bounded appropriately.
\end{lemma}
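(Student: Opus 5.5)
The estimate follows from a triangle inequality that separates the temporal and spatial interpolation errors:
\begin{align*}
v-\tvht = (v - I_\tau v) + (I_\tau v - \Pi_h I_\tau v) = (v-I_\tau v) + (I-\Pi_h) I_\tau v .
\end{align*}
Both operators act on separate variables. $I_\tau$ acts in time, pointwise in $x$. $\Pi_h$ acts in space, pointwise in $t$. Hence they commute, and $(I-\Pi_h)I_\tau v = I_\tau (I-\Pi_h) v$. Throughout, the $\VV$-norm \eqref{eq:norm} is replaced by the equivalent graph norm from Lemma~\ref{lem:aux}(i). Each piece then only needs bounds in $H^1(0,T;L^2(\Omega))$ and $H^{1-\alpha/2}(0,T;H^1(\Omega))$.

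\textbf{Temporal error.} For the $H^1(L^2)$ part, apply \eqref{eq:int3} with $\beta=0$ in Bochner form, pointwise in $x$ and then integrated; this gives $\tau^r\|v\|_{H^{1+r}(L^2)}$. For the $H^{1-\alpha/2}(H^1)$ part, apply \eqref{eq:int3} to $\nabla v$ (and to $v$) with $\beta=\alpha/2<1/2$ and with $r$ replaced by $r-\alpha/2\in[-\beta,1]$. This yields $\tau^{r}\|v\|_{H^{1-\alpha/2+r}(H^1)}$, exactly the first bracket. We also need $v-I_\tau v$ to vanish at $t=0$, so that it stays in $\VV$. This holds because $I_\tau$ interpolates at $t^0=0$ and $v(0)=0$.

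\textbf{Spatial error.} Set $w=(I-\Pi_h)v$. For the $H^1(L^2)$ part we need $\|I_\tau w\|_{H^1(L^2)}\lesssim \|w\|_{H^1(L^2)}$, which is $H^1$-stability of piecewise linear interpolation in one dimension. Then \eqref{eq:int1}, applied to $\partial_t v$ pointwise in time, gives $h^s\|v\|_{H^1(H^s)}$. For the $H^{1-\alpha/2}(H^1)$ part, use \eqref{eq:int2} with exponent $1+s$. Since $\Pi_h$ is time-independent, this estimate lifts to $\|w\|_{H^{1-\alpha/2}(H^1)}\lesssim h^s\|v\|_{H^{1-\alpha/2}(H^{1+s})}$ by interpolating between the $L^2$- and $H^1$-in-time versions. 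The second bracket in the statement writes $b$; this is presumably a typo for $v$.

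\textbf{Main obstacle.} The key technical point is the stability of $I_\tau$ in $H^{1-\alpha/2}(0,T)$ when applied to $w$. This follows from \eqref{eq:int3} with $r=-\beta$, i.e.\ $\|I_\tau w\|_{H^{1-\beta}}\lesssim \|w\|_{H^{1-\beta}}$, which is valid because $1-\beta>1/2$. The $H^1$-stability, by contrast, is direct. A second issue is that $\Pi_h$ is only $L^2$-orthogonal, and its $H^1$-stability (used implicitly in \eqref{eq:int2}) relies on the quasi-uniform mesh; we take \eqref{eq:int2} as given. Finally, since $\Pi_h$ preserves $w(0)=0$, the spatial error also lies in $\VV$.

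Summing the four contributions gives the stated estimate, with $C$ and $C'$ depending only on $\alpha$, $T$ and the shape regularity of the mesh.
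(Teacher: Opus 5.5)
Your proposal is correct and follows the paper's proof essentially step by step. It uses the same splitting $v-\tvht=(v-I_\tau v)+I_\tau(v-\Pi_h v)$ and applies \eqref{eq:int3} with $\beta=\alpha/2$ together with the norm equivalence of Lemma~\ref{lem:f2} (which you invoke through Lemma~\ref{lem:aux}(i)); for the spatial part it uses the $H^1$- and $H^{1-\alpha/2}$-stability of $I_\tau$ combined with \eqref{eq:int1}--\eqref{eq:int2} lifted to the Bochner setting, and your checks that both error pieces vanish at $t=0$ and your reading of $b$ as $v$ make explicit what the paper leaves implicit.
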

\begin{proof}
By the triangle inequality, we can split 
\begin{align*}
    \|v- \tvht\|_\VV
    &\le
    \|v-I_\tau v\|_\VV
    +
    \|I_\tau (v-\Pi_h v)\|_\VV.
\end{align*}
Substitution into the definition of the norm $\|\cdot\|_\VV$ gives
\begin{align*}
\|v-I_\tau v\|_\VV
&\le \|\dt v-\dt I_\tau v\|_{L^2(Q)}
+ \|\DOP^{1-\alpha/2}(v-I_\tau\nabla v)\|_{L^2(Q)}
\\
&\le C \tau^r
\|v\|_{H^{1+r}(0,T;L^2(\Omega))}
+ C'\tau^r \|v\|_{H^{1-\alpha/2+r}(0,T;H^1(\Omega))},
\end{align*}
where we have used the temporal interpolation estimates and, for the
last term, the estimates from Lemma~\ref{lem:f2}.
For the spatial approximation error, we have
\begin{align*}
\|I_\tau(v-\Pi_h v)\|_\VV
&\le
\|\dt I_\tau(v-\Pi_h v)\|_{L^2(Q)}
+ \|\DOP^{1-\alpha/2} I_\tau\nabla(v-\Pi_h v)\|_{L^2(Q)}
\\
&\lesssim
\|v-\Pi_h v\|_{L^2(Q)}
+ \|v-\Pi_h v\|_{H^{1-\alpha/2}(0,T;H^1(\Omega))}.
\end{align*}
In the second step, we used the fact that
$\dt I_\tau=\Pi_\tau$ amounts to the temporal $L^2$-projection, which is contractive with respect
to the $L^2$-norm.
For the second term, we further employed the estimates from Lemma~\ref{lem:f2} together with the stability of the temporal interpolation operator in the $H^s$-norm, namely,
\begin{align*}
\|I_\tau v\|_{H^s(0,T)}
\le C'\|v\|_{H^s(0,T)},
    \quad 1/2 < s \le 1,
\end{align*}
which is a consequence from the embedding
$H^s(0,T)\hookrightarrow C[0,T]$ for $s>1/2$. It can be derived for arbitrary partitions  based on the definition of the Slobodeckij norms, a localized weighted version, support properties and suitable trace free Harding inequalities, see also \cite{Steinbach2008}.
By employing the spatial approximation estimates stated above, we further obtain 
\begin{align*}
\|I_\tau(v-\Pi_h v)\|_\VV
&\le
C h^s
\|v\|_{H^1(0,T;H^s(\Omega))}
\\
&\qquad + C' h^s
\|v\|_{H^{1-\alpha/2}(0,T;H^{1+s}(\Omega))}.
\end{align*}
A combination of the above estimates then yields the assertion of the lemma.
\end{proof}

\begin{remark} \label{rem:adaptive}
The above interpolation error estimates for $\Pi_h$ and $I_\tau$ actually hold for a rather general class of adaptively refined meshes in space and time; see \cite{BramblePasciakSteinbach2002,DieningStornTscherpel2021} and \cite[Ch.~10]{Steinbach2008} for details. 
The approximation error estimate of Lemma~\ref{lem:approx} and the error estimates of Theorem~\ref{thm:err1} and \ref{thm:err2} below, therefore, can be generalized quite easily to adaptive tensor-product meshes. Corresponding numerical results are presented in Section~\ref{subsec:numerics-graded}. 
Also the extension to higher-order polynomial approximations is rather straightforward. 
\end{remark}

We are now in the position to state and prove our first error estimate which provides optimal convergence rates under natural smoothness assumptions.
\begin{theorem}
\label{thm:err1}
(i) Let the solution $u$ of \eqref{eq:sys1}--\eqref{eq:sys3} satisfy the regularity conditions
\begin{align}
u &\in H^{1+r}(0,T;L^2(\Omega)) \cap H^{1-\alpha/2+r}(0,T;H^1(\Omega)) \qquad \text{and} \label{eq:ass1}\\
u & \in H^1(0,T;H^s(\Omega)) \cap H^{1-\alpha/2}(0,T;H^{1+s}(\Omega)) \label{eq:ass2}
\end{align}
for some $0\le r\le 1$ and $0\le s \le 1$.
Then there holds $\|u - \uht\|_\VV = O(\tau^r + h^s)$. \\[0.5ex]
(ii) If $u \in H^2(0,T;L^2(\Omega)) \cap H^{2-\alpha}(0,T;H^2(\Omega))$, then $\|u - \uht\|_\VV = O(\tau + h)$. 
\end{theorem}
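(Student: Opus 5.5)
The plan is the standard C\'ea argument combined with the approximation result of Lemma~\ref{lem:approx}. The discrete problem is a conforming Galerkin approximation, since $\VV_{h\tau}\subset\VV$. Hence we have Galerkin orthogonality: $\A(u-\uht,\vht)=0$ for all $\vht\in\VV_{h\tau}$.

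For any $\vht\in\VV_{h\tau}$, coercivity and continuity from Lemma~\ref{lem:aux} give
\begin{align*}
c_\A\|u-\uht\|_\VV^2 \le \A(u-\uht,u-\uht) = \A(u-\uht,u-\vht) \le C_\A\|u-\uht\|_\VV\|u-\vht\|_\VV .
\end{align*}
Consequently,
\begin{align*}
\|u-\uht\|_\VV \le \frac{C_\A}{c_\A}\inf_{\vht\in\VV_{h\tau}}\|u-\vht\|_\VV .
\end{align*}
For part (i), I choose $\vht=\tuht=\Pi_h I_\tau u$. This is admissible because $u$ is continuous in time, so $I_\tau u$ is defined. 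Moreover $u(0)=0$ gives $I_\tau u(0)=0$, and $\Pi_h$ acts pointwise in time with values in $V_h\subset H_0^1(\Omega)$, so $\tuht\in\VV_{h\tau}$. Lemma~\ref{lem:approx} then bounds $\|u-\tuht\|_\VV$ by $C\tau^r(\cdots)+C'h^s(\cdots)$. The norms appearing there are exactly those assumed finite in \eqref{eq:ass1}--\eqref{eq:ass2}, where the function called $b$ in the lemma is read as $u$. This yields $O(\tau^r+h^s)$.

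For part (ii), I take $r=s=1$ and verify \eqref{eq:ass1}--\eqref{eq:ass2} from $u\in H^2(0,T;L^2(\Omega))\cap H^{2-\alpha}(0,T;H^2(\Omega))$. The memberships $u\in H^{1+r}(L^2)=H^2(L^2)$ and $u\in H^{1-\alpha/2}(H^{2})$ are immediate, since $2-\alpha\ge1-\alpha/2$. The remaining two need interpolation between the two given spaces. The first is $H^{2-\alpha/2}(0,T;H^1(\Omega))$: the midpoint (parameter $1/2$) of $H^2(L^2)$ and $H^{2-\alpha}(H^2)$ is $H^{2-\alpha/2}(H^1)$, by standard interpolation of Bochner--Sobolev scales with $[L^2,H^2\cap H_0^1]_{1/2}=H_0^1$. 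The second is $H^1(0,T;H^1(\Omega))$, which follows from the embedding $H^{2-\alpha/2}(H^1)\hookrightarrow H^1(H^1)$.

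The main obstacle is this interpolation step: it requires justifying the mixed-norm interpolation identity, for instance via a spectral or Fourier argument using the eigenfunctions of the Dirichlet Laplacian in space and an extension operator in time. The C\'ea part itself is routine.
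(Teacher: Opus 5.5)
Your proposal is correct and follows essentially the same route as the paper. The paper also applies C\'ea's lemma, inserts $\Pi_h I_\tau u$ into the approximation lemma, and takes $r=s=1$ for part (ii). You add details the paper leaves implicit: the admissibility of the interpolant in $\VV_{h\tau}$, and the interpolation argument giving $u\in H^{2-\alpha/2}(0,T;H^1(\Omega))\cap H^1(0,T;H^1(\Omega))$, which the paper simply asserts.
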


\begin{proof}
By Lemma~\ref{lem:aux} and Céa's lemma, we immediately get
\begin{align*}
\|u-\uht\|_\VV \le \frac{C_\A}{c_\A} 
    \|u-\tuht\|_\VV
\end{align*}
for any $\tuht \in \VV$. We may choose $\tuht = \Pi_h  I_\tau u$ and employ Lemma~\ref{lem:approx} to obtain the first error estimate. 
If $u \in H^2(0,T;L^2(\Omega)) \cap H^{2-\alpha}(0,T;H^2(\Omega))$, then \eqref{eq:ass1}--\eqref{eq:ass2} are satisfied with $r=s=1$, which already yields the second claim.
\end{proof}

\begin{remark} \label{rem:energy}
The regularity assumptions of the previous theorem are natural. In particular, see Theorem~\ref{thm:reg}, we know that $u \in H^2(0,T;L^2(\Omega)) \cap H^{1-\alpha}(0,T;H^2(\Omega))$ is valid if $f\in H^1(0,T;L^2(\Omega))$ with $f(0)=0$ and the domain $\Omega$ is convex.
The convergence rate $O(h + \tau)$ obtained under these regularity assumptions is of optimal order. 
Further recall that $\|v\|_\VV \simeq \|v\|_{H^1(0,T;L^2(\Omega))} + \|v\|_{H^{1-\alpha/2}}(0,T;H^1(\Omega))$ for all $v \in \VV$. 
Thus the error estimates of the theorem  are also valid in standard Bochner-Sobolev norms. 
Since the embedding $H^{1-\alpha/2}(0,T)   \hookrightarrow C[0,T]$ is continuous, we obtain
\begin{align*}
    \|u-\uht\|_{L^\infty(0,T;H^1(\Omega))}
    \le
    C\|u-\uht\|_\VV,
\end{align*}
which immediately yields a corresponding error estimates in the $L^\infty(H^1)$-norm.
\end{remark}

\subsection{Improved estimates by a duality argument}

We next derive an estimate for the error in the $L^2(Q)$-norm by means of
a duality argument.
Let $g\in L^2(Q)$ be given and define
\begin{align}
    z(t)=\int_0^t w(s)\,ds,
\end{align}
where $w$ is the solution of the following dual problem:
\begin{alignat}{2}
-\dt w-\Delta\DTM^{1-\alpha}w
&=g
&&\qquad\text{in }\Omega\times(0,T),
\\
w&=0
&&\qquad\text{on }\partial\Omega\times(0,T),
\\
w(T)&=0
&&\qquad\text{in }\Omega.
\end{alignat}
If $\Omega$ is convex, then the solution of the dual problem satisfies
\[
    w\in
    H^1(0,T;L^2(\Omega))
    \cap
    H^{1-\alpha}(0,T;H^2(\Omega)),
\]
and its norm is bounded by the $L^2$-norm of $g$;
this follows with the same arguments as used in Section~\ref{sec:reg}.
Hence $z\in H^2(0,T;L^2(\Omega)) 
\cap H^{2-\alpha}(0,T;H^2(\Omega))$ and 
\begin{align*}
    \|z\|_{H^2(0,T;L^2(\Omega))}
    + \|z\|_{H^{2-\alpha}(0,T;H^2(\Omega))}
    \le C\|g\|_{L^2(Q)}.
\end{align*}
We now consider the dual solutions $w$ and $z$ for right hand side $g=u-\uht$. This yields
\begin{align*}
\|u-\uht\|_{L^2(Q)}^2
&= (g,u-\uht)_{L^2(Q)} \\
&= (-\dt w,u-\uht)_{L^2(Q)}
- (\Delta\DTM^{1-\alpha}w,u-\uht)_{L^2(Q)} \\
&= (\dt(u-\uht),w)_{L^2(Q)}
+ (\DOP^{1-\alpha/2}
    \nabla(u-\uht),
    \ITM^{\alpha/2}\nabla w)_{L^2(Q)}.
\end{align*}
In the last step, we used that $\DTM^{1-\alpha}w    =-\dt\ITM^{\alpha/2}\ITM^{\alpha/2}w$; see Lemma~\ref{lem:f2}, 
and we subsequently transferred the first two operators to the other side by integration by parts and adjointness; see Lemma~\ref{lem:f1}.
We now substitute $w=\dt z$ in this formula and define its approximation $\tilde z_{h\tau} = \Pi_h I_\tau z$.
Then
\begin{align*}
\|u-\uht\|_{L^2(Q)}^2
&=
(\dt(u-\uht),\dt z)_{L^2(Q)}
+ (\IOP^{\alpha/2}\dt\nabla(u-\uht),
    \ITM^{\alpha/2}\dt z)_{L^2(Q)}
\\
&=
\A(u-\uht,z) 
= \A(u-\uht,z-\tilde z_{h\tau}).
\end{align*}
The last identity follows from Galerkin orthogonality, which is implied by the weak form of the continuous and the discrete problem. By continuity of the bilinear form, we then get
\begin{align*}
\|u-\uht\|_{L^2(Q)}^2
&\le C_\A \|u-\uht\|_\VV \|z-\tilde z_{h\tau}\|_\VV
\\
&\le C (\tau+h)^2 \|u\|_{H^2(L^2)\cap H^{2-\alpha}(H^2)}
\|z\|_{H^2(L^2)\cap H^{2-\alpha}(H^2)}.
\end{align*}
Since the norm of $z$ can be bounded by the $L^2$-norm of the error $g=u-\uht$, the previous considerations  immediately lead to the following results. 
\begin{theorem}[$L^2$-error estimate]  \label{thm:err2} 
Let $u\in H^2(0,T;L^2(\Omega)) \cap
    H^{2-\alpha}(0,T;H^2(\Omega))$ and assume that $\Omega$ is convex. 
    Then $\|u - \uht\|_{L^2(Q)} =  O(\tau^2+h^2)$.
\end{theorem}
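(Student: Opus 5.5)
The argument is an Aubin--Nitsche duality argument built on the derivation just preceding the statement; the task is to arrange it rigorously in the order below.

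\emph{Step 1: dual problem and its regularity.} Fix $g = u-\uht \in L^2(Q)$ and let $w$ solve the backward dual problem with terminal condition $w(T)=0$. Under the substitution $t\mapsto T-t$, the operator $\DTM^{1-\alpha}$ becomes $\DOP^{1-\alpha}$. Hence Theorem~\ref{thm:wp} gives existence and uniqueness of $w$, and the first part of Theorem~\ref{thm:reg} on the convex domain gives
\[
w\in H^1(0,T;L^2(\Omega))\cap H^{1-\alpha}(0,T;H^2(\Omega)), \qquad \|w\| \lesssim \|g\|_{L^2(Q)} .
\]
Set $z(t)=\int_0^t w(s)\,ds$. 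Then $z(0)=0$ and $\dt z = w$. Integrating once in time raises the temporal regularity by one order: $H^1 \to H^2$ is clear, and $H^{1-\alpha}\to H^{2-\alpha}$ follows from the norm equivalences of Lemma~\ref{lem:f2} and Remark~\ref{rem:f2}. This yields $z\in\VV$ and
\[
\|z\|_{H^2(L^2)}+\|z\|_{H^{2-\alpha}(H^2)}\lesssim\|g\|_{L^2(Q)} .
\]

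\emph{Step 2: error representation.} Test the dual equation with $u-\uht$. This function vanishes at $t=0$ and on $\partial\Omega$, while $w$ vanishes at $t=T$. Integrating by parts in time in the first term leaves no boundary terms. In the second term, write $\DTM^{1-\alpha}w = -\dt \ITM^{\alpha/2}\ITM^{\alpha/2} w$ and use \eqref{eq:adj}, \eqref{eq:f2a} together with one spatial integration by parts. Then substitute $w=\dt z$. The result is the identity
\[
\|u-\uht\|_{L^2(Q)}^2=\A(u-\uht,z).
\]
To make these manipulations rigorous I would verify them first for smooth $u-\uht$ and then pass to general elements by density in $\VV$, using the continuity bound from Lemma~\ref{lem:aux}(ii).

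\emph{Step 3: Galerkin orthogonality and approximation.} Since $\tilde z_{h\tau}=\Pi_h I_\tau z\in\VV_{h\tau}$, Galerkin orthogonality gives $\A(u-\uht,\tilde z_{h\tau})=0$. Continuity of $\A$ then bounds $\|u-\uht\|_{L^2(Q)}^2$ by $C_\A\|u-\uht\|_\VV\|z-\tilde z_{h\tau}\|_\VV$. I estimate each factor with $r=s=1$:
\begin{itemize}
\item Theorem~\ref{thm:err1}(ii) gives $\|u-\uht\|_\VV\lesssim(\tau+h)$.
\item Lemma~\ref{lem:approx} gives $\|z-\tilde z_{h\tau}\|_\VV\lesssim(\tau+h)\|g\|_{L^2(Q)}$.
\end{itemize}
Lemma~\ref{lem:approx} with $r=s=1$ requires $z\in H^{2-\alpha/2}(H^1)$ and $z\in H^{1-\alpha/2}(H^2)$. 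Both follow from the Step~1 regularity of $z$ by interpolating between $H^2(L^2)$ and $H^{2-\alpha}(H^2)$: the interpolation index $1/2$ gives $H^{2-\alpha/2}(H^1)$, and the second space is a trivial embedding of $H^{2-\alpha}(H^2)$.

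Dividing by $\|u-\uht\|_{L^2(Q)}$ gives $\|u-\uht\|_{L^2(Q)}\lesssim(\tau+h)^2$. This is the claimed bound, since $(\tau+h)^2 \lesssim \tau^2+h^2$.

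The main obstacle is Step~1, namely the regularity of the backward dual problem and its transfer to $z$. Time reversal reduces the dual problem to the forward one, but one must check that the Riemann--Liouville structure and the terminal trace are compatible, so that $\DTM^{1-\alpha}w=-\dt\ITM^{\alpha}w$ holds in $L^2(H^{-1})$. One must also check the interpolation step giving the intermediate $H^1(\Omega)$ regularity of $z$.
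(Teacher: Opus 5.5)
Your proposal is correct and follows essentially the same argument as the paper. Both use the backward dual problem with antiderivative $z=\int_0^t w\,ds$, and both derive the identity $\|u-\uht\|_{L^2(Q)}^2=\A(u-\uht,z)$ via $\DTM^{1-\alpha}w=-\dt\ITM^{\alpha/2}\ITM^{\alpha/2}w$. Both then conclude with Galerkin orthogonality against $\Pi_h I_\tau z$, continuity of $\A$, Theorem~\ref{thm:err1}(ii) and Lemma~\ref{lem:approx}. Your extra steps only make explicit what the paper leaves implicit: the time reversal that lets you invoke Theorems~\ref{thm:wp} and~\ref{thm:reg} for $w$, the density justification of the error identity, and the interpolation giving $z\in H^{2-\alpha/2}(0,T;H^1(\Omega))$ as needed for Lemma~\ref{lem:approx} with $r=s=1$.
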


The proof of this result follows immediately from the preceding discussion. 
\begin{remark} \label{rem:duality}
We again define $\tuht = \Pi_h I_\tau u$ for approximation. 
Then, by the interpolation error estimates stated above, one can see that
\begin{align*}
\|u-\tuht\|_{L^2(Q)}
&\le \|u-\Pi_hu\|_{L^2(Q)} + \|\Pi_h(u-I_\tau u)\|_{L^2(Q)} \\
&\le C h^2 \|u\|_{L^2(0,T;H^2(\Omega))}
+ C' \tau^2 \|u\|_{H^2(0,T;L^2(\Omega))}.
\end{align*}
Thus, the error estimate from the preceding theorem yields the optimal achievable convergence rates, although it is somewhat suboptimal with respect to the required regularity of $u$. 
By the standard interpolation inequality 
$\|v\|_{L^\infty(0,T)} \le  C \|v\|_{L^2(0,T)}^{1/2}
    \|v\|_{H^1(0,T)}^{1/2}$ 
and the error estimates obtained before, we can further see that
\begin{align*}
\|u-\uht\|_{L^\infty(0,T;L^2(\Omega))}
&\lesssim
\|u-\uht\|_{L^2(Q)}^{1/2}
\|u-\uht\|_{H^1(0,T;L^2(\Omega))}^{1/2} 
\lesssim (\tau+h)^{3/2}.
\end{align*}
Here we assumed that the regularity hypotheses of the preceding lemma hold.
\end{remark}

\section{Efficient implementation}
\label{sec:fast}

The tensor-product structure of the space-time discretization can be exploited for an efficient numerical realization. 
A first key observation is that the variational formulation involves the temporal derivatives only. We therefore introduce
\begin{align*}
    p=\dt u,  \qquad q=\dt v.
\end{align*}
Since the functions in $\VV$ satisfy homogeneous initial condition, $u$ can be recovered from $p$ by 
\begin{align*}
    u(t)=\int_0^t p(s)\,ds.
\end{align*}
Thus, instead of continuous piecewise linear basis functions $\psi_j$ for $u$ and $v$, we can use discontinuous piecewise constant basis functions $\chi_j = \dt \psi_j$ for their temporal derivatives $p=\dt u$ and $q = \dt v$.
For the tensor-product discretization \eqref{eq:Vht}, this leads to a representation of the discrete bilinear form with the special structure
\begin{align}\label{eq:fast-tensor}
    \A(u_{h\tau},v_{h\tau})
    =
    \mathbf q^\top
    \left(
        D_\tau\otimes M_h
        +
        C_\tau\otimes K_h
    \right)
    \mathbf p,
\end{align}
where $M_h$ and $K_h$ denote the spatial mass and stiffness matrices,
respectively, and the vectors $\mathbf{p}$ and $\mathbf{q}$ represent the degrees of freedom of $p_{h\tau}=\dt \uht$ and $q_{h\tau}=\dt \vht$.  
The entries of the temporal matrices in \eqref{eq:fast-tensor} are then given by
\begin{align}
(D_\tau)_{ij} 
  &=(\chi_j,\chi_i)_{L^2(0,T)}
   = \tau_i\delta_{ij}, \label{eq:Wtau}  
\\
(C_\tau)_{ij}
  &= ( I_{0+}^{\alpha/2}\chi_j,  I_{T-}^{\alpha/2}\chi_i )_{L^2(0,T)}
   = ( I_{0+}^{\alpha}\chi_j, \chi_i )_{L^2(0,T)}, \label{eq:Ktau}
\end{align}
where $\chi_i$ denotes the indicator function for the $i$th time interval. For the second equation, we used the adjointness relation \eqref{eq:adj} and the semigroup property \eqref{eq:semi} for the fractional integrals.
Let us note that $D_\tau$ is diagonal and the matrix $C_\tau$ is lower triangular, since $\IOP^\alpha$ is a causal operator. 
Moreover, the entries of $C_\tau$ can be computed analytically for general time grids.

\begin{remark} 
With the above modifications, the numerical solution of the discrete system \eqref{eq:disc} can be reduced to the solution of a lower-triangular block system with sparse blocks. In particular, method \eqref{eq:disc} can be realized as a time-stepping scheme, which requires the solution of one elliptic finite element system in space for every time step.
Assuming that the spatial finite element systems can be solved in $N_x^\gamma$ operations, the space-time system can thus be solved in $O(N_x^\gamma N_t + N_x N_t^2)$ complexity using block-forward substitution. The second part stems from the recursive update of the residuals which can, however, be improved substantially using matrix compression techniques~\cite{DoelzEggerShashkov2021,ZengTurnerBurrage2018}.
On a uniform time grid, the system matrix has the particular form 
\begin{align} \label{eq:toeplitz}
    \tau I \otimes M_h
    + C_\tau\otimes K_h
\end{align}
and the entries of the fractional integral matrix $C_\tau$ are given by 
 \begin{align*} 
(C_\tau)_{ij} &=  
    \tau^{\alpha+1} \tfrac{(k+1)^{\alpha+1}
        -2k^{\alpha+1}
        +(k-1)_+^{\alpha+1}
}{\Gamma(\alpha+2)},
    \qquad
    k=i-j\ge0,
\end{align*}
with $(a)_+=\max(a,0)$. 
Hence \eqref{eq:toeplitz} is a lower-triangular block-Toeplitz matrix with sparse blocks. 
The complexity of the numerical solution can be reduced to $O(N_x^\gamma N_t + N_x N_t \log^2 N_t )$ using recursive fast Toeplitz multiplications for the residual computation~\cite{HairerLubichSchlichte1985}. 
We further note that Bini's approximate circular embedding algorithm~\cite{Bini1984} allows a parallel-in-time solution of the problem in $O( N_x^\gamma N_t + N_x N_t \log N_t )$ complexity; see \cite{LuPangSun2015} for related work.
\end{remark}

\section{Numerical results}
\label{sec:num}

We next illustrate the theoretical results of the method discussed in the previous sections by some numerical tests. 
In all our computations, we consider the model problem \eqref{eq:sys1}--\eqref{eq:sys3} on the unit square
$\Omega=(0,1)^2$ with $T=1$, and we use
$\alpha=0.6$ throughout.
For our computations, we consider the manufactured solution
\begin{align}\label{eq:numerical-exact-solution}
    u(x,y,t)=t^\beta\sin(\pi x)\sin(\pi y).
\end{align}
This is a solution of \eqref{eq:sys1}--\eqref{eq:sys3} with the corresponding right-hand side given by
\begin{equation}\label{eq:numerical-forcing-2d}
f(x,y,t)
=
\left[
\beta t^{\beta-1}
+
2\pi^2
\tfrac{\Gamma(\beta+1)}{\Gamma(\beta+\alpha)}
t^{\beta+\alpha-1}
\right]
\sin(\pi x)\sin(\pi y).
\end{equation}
The solution $u$ is smooth in space, while the parameter $\beta$ allows us to control its temporal regularity. For non-integer $\beta$, we have $t^\beta \in H^{1+r}(0,T)$ if, and only if, $r<\beta-\tfrac12$, and hence
\begin{align*}
    u \in H^{1+r}(0,T;H^2(\Omega))
    \qquad \text{if, and only if,} \quad  \beta > r + \tfrac12 \text{ for } \beta \not\in\NN.
\end{align*}
For $\beta \in \NN$ the solution $u$ is smooth in time as well. 
In our numerical tests, we first examine the smooth case $\beta=2$, and then investigate the cases $\beta=0.75$ and $\beta=1.25$ leading to reduced temporal regularity. 
We use spatial meshes obtained by subdividing a uniform  $n_x\times n_x$ Cartesian grid into triangles with mesh size $h\simeq n_x^{-1}$ and $N_x = (n_x-1)^2$ interior vertices.  
Unless stated otherwise, we used a uniform partition of the time interval with step size $\tau=T/N_t$. 
All experiments were run on a MacBook Air with M1 processor and $16\,\mathrm{GB}$ RAM. Spatial finite-element assembly was
performed with \texttt{Firedrake} \cite{rathgeber2016firedrake}. The temporal block algebra, sparse 
factorizations, and FFTs were implemented with \texttt{NumPy} and \texttt{SciPy}. %

\subsection{Convergence for smooth solutions}
\label{subsec:numerics-balanced}

We choose $\beta=2$ such that the manufactured solution \eqref{eq:numerical-exact-solution} is smooth in space and time. In particular, $u \in H^2(0,T;H^2(\Omega))$ such that the regularity assumptions of Theorems~\ref{thm:err1}
and~\ref{thm:err2} are satisfied. Hence we expect
\begin{align*}
\|u - \uht\|_\VV = O(h+\tau) 
\qquad \text{and} \qquad 
\|u - \uht\|_{L^2(Q)} = O(h^2+\tau^2).
\end{align*}
In view of these estimates, we refine space and time simultaneously with
$h\simeq \tau=N^{-1}$.
The discretization errors observed in our numerical tests are reported in
Table~\ref{tab:numerics-balanced}. 
\begin{table}[htp]
\centering
\caption{Convergence for smooth solution on uniform meshes with $h \simeq \tau=N^{-1}$.}
\label{tab:numerics-balanced}
\small
\begin{tabular}{c|c||c|c||c|c}
\toprule
$N$ & $h\simeq\tau$ & $\|u-\uht\|_\VV$ & rate & $\|u-\uht\|_{L^2(Q)}$ & rate \\
\midrule
4   & $2.5000\cdot10^{-1}$ & $7.7348\cdot10^{-1}$ & --    & $3.3345\cdot10^{-2}$ & -- \\
8   & $1.2500\cdot10^{-1}$ & $3.9501\cdot10^{-1}$ & 0.969 & $8.8117\cdot10^{-3}$ & 1.920 \\
16  & $6.2500\cdot10^{-2}$ & $1.9828\cdot10^{-1}$ & 0.994 & $2.2362\cdot10^{-3}$ & 1.978 \\
32  & $3.1250\cdot10^{-2}$ & $9.9151\cdot10^{-2}$ & 1.000 & $5.6134\cdot10^{-4}$ & 1.994 \\
64  & $1.5625\cdot10^{-2}$ & $4.9548\cdot10^{-2}$ & 1.001 & $1.4051\cdot10^{-4}$ & 1.998 \\
128 & $7.8125\cdot10^{-3}$ & $2.4761\cdot10^{-2}$ & 1.001 & $3.5142\cdot10^{-5}$ & 1.999 \\
\bottomrule
\end{tabular}
\end{table}
The results are in perfect agreement with the theoretical predictions. 
In Figure~\ref{fig:numerics-balanced}, we further investigate the behavior of different error contributions. 
%
\begin{figure}[htp]
\centering
\includegraphics[width=0.45\textwidth]{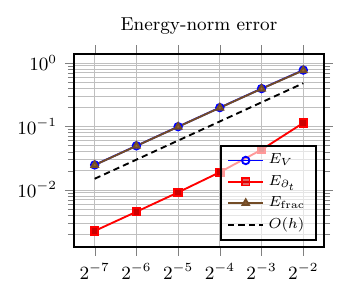} 
\hspace*{0.5cm}
\includegraphics[width=0.45\textwidth]{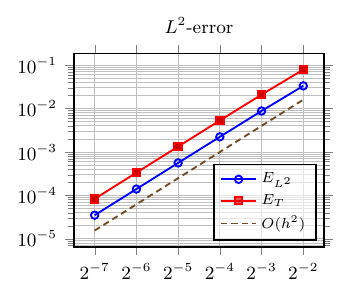} \\[-0.4cm]
\caption{Convergence for the smooth solution under uniform refinement in
space and time. Left: mesh size $h\simeq\tau$ versus the energy error
$E_V=\|u-\uht\|_\VV$ and its components
$E_{\partial_t}=\|\partial_t(u-\uht)\|_{L^2(Q)}$ and
$E_{\mathrm{frac}}=\|\DOP^{1-\alpha/2}\nabla(u-\uht)\|_{L^2(Q)}$.
Right: mesh size $h\simeq\tau$ versus the space--time error
$E_{L^2(Q)}=\|u-\uht\|_{L^2(Q)}$ and the final-time error
$E_T=\|u(T)-\uht(T)\|_{L^2(\Omega)}$.}
\label{fig:numerics-balanced}
\end{figure}
The left plot shows that the fractional-gradient component dominates the
energy error in this example. For our choice $h \simeq \tau$, both components, however, converge with first order, consistent with the interpolation
error estimates discussed in Remark~\ref{rem:energy}.
As predicted by Theorem~\ref{thm:err2}, the space--time $L^2$-error converges with second order. The final-time error converges at the same rate, which is possible from an interpolation point of view, but not fully covered by the arguments of
Remark~\ref{rem:duality}.

\subsection{Reduced temporal regularity}
\label{subsec:numerics-startup}

We next investigate the effect of reduced temporal regularity of the solution $u$ in \eqref{eq:numerical-exact-solution}  for non-integer $1/2<\beta < 3/2$, caused by a mild singularity around $t=0$. 
The relevant regularity for our analysis can here be stated as
\begin{align*}
u \in H^{\beta+1/2}(0,T;H^2(\Omega)).
\end{align*}
The results of Theorem~\ref{thm:err1} and \ref{thm:err2} then immediately lead to the convergence rates
\begin{align*}
\|u - \uht\|_\VV = O(\tau^{\beta-1/2} + h)
\qquad \text{and} \qquad 
\|u - \uht\|_{L^2(Q)} = O(\tau^{\beta+1/2} + h^2).
\end{align*}
These rates are the optimal ones that can be expected in view of the corresponding interpolation errors. 
For uniform meshes with $h \simeq \tau$, we thus expect reduced convergence rates.
In our numerical tests, we consider the two choices $\beta \in \{0.75,1.25\}$. 
Figure~\ref{fig:numerics-uniform-singular} illustrates the convergence behavior observed in our computations. 

\begin{figure}[htp]
\centering
\includegraphics[width=0.45\textwidth]{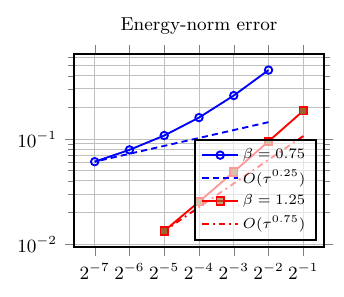} 
\hspace*{0.5cm}
\includegraphics[width=0.45\textwidth]{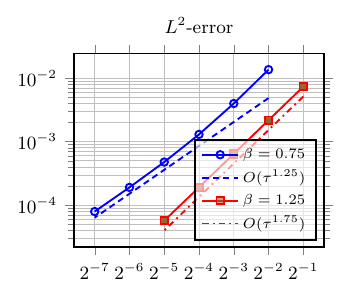} \\[-0.4cm]
\caption{Reduced convergence rates for non-smooth solutions \eqref{eq:numerical-exact-solution} with $\beta\in\{0.75,1.25\}$ and balanced grid sizes $h \simeq \tau$. 
In our computations, we choose $h=\tau/2$ for $\beta=0.75$ and $h=\tau/16$ for $\beta=1.25$.
Left plot: Time step $\tau$ vs. energy-norm error $\|u - \uht\|_\VV$. Right plot: Time step $\tau$ vs. $L^2$-norm error $\|u - \uht\|_{L^2(Q)}$. 
}
\label{fig:numerics-uniform-singular}
\end{figure}
Observe that the energy-norm and $L^2$-errors here contain contributions of different order, leading to a dedicated pre-asymptotic phase. The asymptotic rates obtained for $h \simeq \tau \to 0$, however, are again in very good agreement with the theoretical predictions.  

\begin{remark}
In order to balance the spatial and temporal approximation errors, one could choose the discretization parameters according to $h \simeq \tau^{\beta-1/2}$. This would lead to optimal convergence rates $\|u-\uht\|_\VV=O(h)$ and $\|u-\uht\|_{L^2(Q)}=O(h^2)$, however, at a sub-optimal number of degrees of freedom. 
\end{remark}

\subsection{Graded temporal meshes}
\label{subsec:numerics-graded}

The reduced convergence rates observed in the previous section are caused by the initial layer of the solution $u$ in \eqref{eq:numerical-exact-solution} for non-integer $1/2 < \beta < 3/2$.
This suggests to use local refinement of the time discretization near $t=0$. In order to resolve the initial layer, we therefore consider graded meshes of the form
\begin{align} \label{eq:graded}
    t_n=T\left(n/N_t\right)^\gamma,
    \qquad n=0,\ldots,N_t,
\end{align}
with some $\gamma>1$; see \cite{StynesORiordanGracia2017} for a detailed discussion. For this choice of temporal grid, the largest time step is attained away from the initial layer and satisfies
\begin{align*}
    \tau=\max_{1\le n\le N_t}(t_n-t_{n-1})\simeq N_t^{-1},
\end{align*}
while the step-size close to the singularity is $\tau_{\min} \simeq \tau^{\gamma}$. 
Hence, grading provides additional resolution near $t=0$ without changing
the asymptotic relation between the number of time steps and the maximal
stepsize.
In view of our previous considerations, we choose $\gamma=\frac{2}{2\beta-1}$, which leads to
$\tau_{\min}^{\beta-1/2} \simeq \tau^{\gamma (\beta-1/2)} = \tau$, and allows to establish optimal rates 
\begin{align*}
\|u-\tuht\|_\VV \simeq O(h+\tau) 
\qquad \text{and} \qquad 
\|u - \tuht\|_{L^2(Q)} \simeq O(h^2+\tau^2)
\end{align*}
for the interpolation errors, up to logarithmic powers in $\tau$; 
this can be verified by similar arguments as employed in the proof of Lemma~\ref{lem:approx}. 
As a consequence of the quasi-optimality of the space-time Galerkin approximation, and noting that the duality argument is not affected by the reduced regularity of the solution $u$, we obtain the same optimal convergence rates, up to logarithmic power in $\tau$, also for the finite-element errors.
\begin{figure}[htp]
\centering
\includegraphics[width=0.45\textwidth]{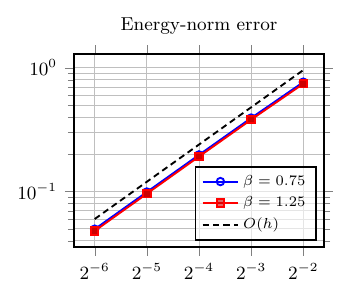} 
\hspace*{0.5cm}
\includegraphics[width=0.45\textwidth]{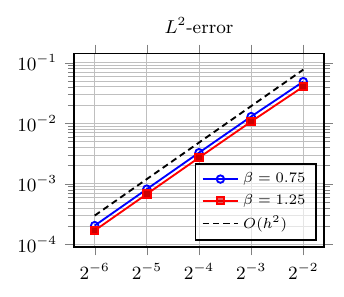} \\[-0.4cm]
\caption{Convergence rates for non-smooth solution \eqref{eq:numerical-exact-solution} on graded meshes \eqref{eq:graded}. For our computations, we choose 
$\gamma=4$ for $\beta=0.75$ and $\gamma=4/3$ for $\beta=1.25$, and we use $n_x=N_t=N$.
Left plot: Meshsize $h=1/N$ vs. energy-norm error $\|u - \uht\|_\VV$.  Right plot:
Meshsize $h=1/N$ vs. $L^2$-error $\|u - \uht\|_{L^2(Q)}$.}
\label{fig:numerics-graded}
\end{figure}

As expected from our previous considerations, the appropriate grading of the temporal mesh allows to regain the full convergence orders without increasing the number of time-steps. 
The sub-optimality by logarithmic factors in $\tau$ is not visible in our computational results.
%

\subsection{Efficient solvers}
\label{subsec:numerics-solver}

As a final step in this section, we now report in more detail on the computational performance of the algorithms used in our numerical tests. Following the remarks of Section~\ref{sec:fast}, we employed two different methods:
\begin{itemize}
\item[(M1)] Block-forward substitution with basic history updates: One spatial finite element problem is solved in every time-step. This involves factorization of the corresponding system matrix and forward/backward substitution. The history residual is computed by summation over all previous time-steps. This method is applicable to non-uniform time-grids and has a complexity of $O(N_x^{3/2} N_t + N_x N_t^2)$. 
\item[(M2)] Fast implementation using block-Toeplitz structure: For uniform time step, the system matrix arising in every time step of the block-forward substitution stays the same and, hence, has to be factorized only once. In every time-step, the spatial finite element systems are solved by forward/backward substitution. The residual history is updated using the recursive fast-convolution approach of \cite{HairerLubichSchlichte1985}. The overall complexity of this algorithm is $O(N_x^{3/2} + N_t N_x \log N_x + N_x N_t \log^2 N_t)$.
\end{itemize}
To obtain a fair comparison of the two methods, we compare their performance on the same test problem mentioned in Section~\ref{subsec:numerics-balanced}.
We fix $n_x=64$, so that $N_x=(64-1)^2=3969$, and use a uniform grid in time. 
In Method (M2), recursive FFT updates are employed for blocks of size $ \ge 128$ only. 
Table~\ref{tab:numerics-solver} reports the total solver
times and the resulting speedup when employing the Toeplitz structure and fast convolutions for the history updates. 

\begin{table}[htp]
\centering
\caption{Comparison of computation times for the two methods (M1) and (M2) employed in our numerical tests. The columns $M_1$ and $M_2$ report the total times for the two algorithms (median over $5$ runs).}
\label{tab:numerics-solver}
\small
%
\setlength{\tabcolsep}{8pt}
\begin{tabular}{c||c|c||c|c||c}
\toprule
$N_t$ & $M_1$ & rate & $M_2$ & rate & speedup\\
\midrule
128  & 0.913  & ---   & 0.038 & ---   & 23.81\\
256  & 1.906  & 1.062 & 0.078 & 1.019 & 24.53\\
512  & 4.178  & 1.133 & 0.166 & 1.092 & 25.22\\
1024 & 9.879  & 1.242 & 0.365 & 1.139 & 27.08\\
2048 & 25.524 & 1.369 & 0.813 & 1.156 & 31.41\\
\bottomrule
\end{tabular}
\end{table}
For medium values of $N_t$, the linear solver complexity $O(N_t N_x^{3/2})$ dominates the overall computational cost of method (M1), which results in an almost linear increase of the complexity with the number of time points on coarse grids.   
For large $N_t$, the complexity of the history computation $O(N_x N_t^2)$ gains significance, which explains the increase of the observed rate for finer time grids. 
In Figure~\ref{fig:numerics-solver}, we display the computation times of the two solution methods and further illustrate those of the two algorithms for the history computation. 

\begin{figure}[htp]
\centering
\includegraphics[width=0.45\textwidth]{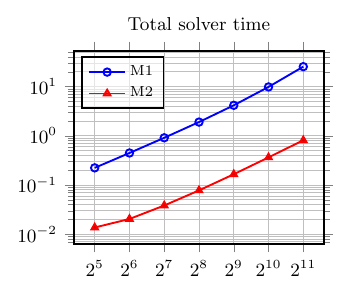} 
\hspace*{0.5cm}
\includegraphics[width=0.45\textwidth]{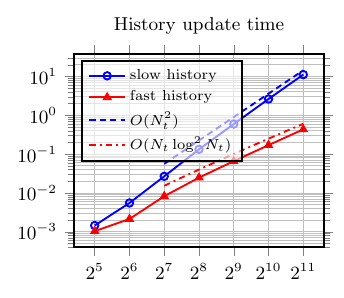} \\[-0.4cm]
\caption{Comparison of two numerical solution strategies: Left plot: $N_t$ vs. total computation times in seconds of method (M1) and (M2). Right plot: $N_t$ vs. time for history updates in seconds.}
\label{fig:numerics-solver}
\end{figure}
The plots clearly illustrate the different complexity of the two different history update strategies. They also indicate that the linear solver times are still dominating for method (M1) for $N_t \le 2048$, resulting in an almost linear increase of the total computations times. 
As predicted by our theoretical considerations, the exploitation of the Toeplitz structure leads to a substantial increase in computational performance.

\section{Discussion}
\label{sec:final}

%
The main contribution of this work is a space-time variational formulation for fractional diffusion based on a natural coercivity structure of the fractional temporal operator. This structure yields well-posedness of the continuous problem and quasi-optimality of the corresponding Galerkin approximations. Based on this framework, we developed a systematic tensor-product finite element discretization and established optimal convergence rates under regularity assumptions that account for the reduced temporal regularity of fractional diffusion problems. The theoretical results are confirmed by the numerical experiments.
Despite its space-time variational formulation, the tensor-product structure permits an efficient realization as a time-stepping procedure. This combines the analytical advantages of the space-time framework with the computational structure of conventional time-stepping methods.

Several extensions of the results presented in this work might be of interest. The regularity theory could be generalized to incompatible initial data, which may lead to stronger singularities near the initial time. Higher-order and adaptive discretizations, including graded temporal meshes, appear to be natural extensions. An efficient realization on nonuniform time grids and the development of a posteriori error estimates would provide further opportunities for adaptive space-time discretization. These directions could also facilitate extensions of the present framework to more general fractional evolution problems.

\subsection*{Acknowledgements}
The work of the first author is supported by the Collaborative Research Centre CREATOR (DFG: Project-ID 492661287/TRR 361; FWF: 10.55776/F90).

\subsection*{Declaration of AI use} 
OpenAI's ChatGPT (GPT-5.6 Luna) was used in preparation of the manuscript for proof-checking, language revision, and typesetting assistance. The authors independently verified the mathematical content and take full responsibility for the manuscript.


\end{document}